\documentclass{article}
\usepackage{amsmath}
\usepackage{amssymb}
\usepackage{amsthm}
\usepackage{bbm,mathtools}

\newcommand{\e}[1]{\prescript{\epsilon\!}{}{#1}}

\newcommand{\mli}{\mathop{\rm \mu\text{-}liminf}}

\def\gph{\mathop{\rm gph}}

\def\reals{\mathbb{R}}

\def\uball{\mathbb{B}}
\def\ereals{\overline{\mathbb{R}}}

\def\interior{\mathop{\rm int}\nolimits}
\def\rinterior{\mathop{\rm rint}}

\def\comp{\raise 1pt \hbox{$\scriptstyle\circ$}}

\def\dom{\mathop{\rm dom}\nolimits}

\def\upto{{\raise 1pt \hbox{$\scriptstyle \,\nearrow\,$}}}
\def\downto{{\raise 1pt \hbox{$\scriptstyle \,\searrow\,$}}}
\def\inte{\mathop{\rm int}}
\def\cl{\mathop{\rm cl}\nolimits}
\def\co{\mathop{\rm co}}

\def\epi{\mathop{\rm epi}}

\def\tos{\rightrightarrows}

\def\B{{\cal B}}

\def\H{{\cal H}}

\def\T{{\cal T}}

\newtheorem{theorem}{Theorem}
\newtheorem{lemma}[theorem]{Lemma}
\newtheorem{corollary}[theorem]{Corollary}

\newtheorem{example}{Example}
\newtheorem{remark}{Remark}
\theoremstyle{definition}
\newtheorem{definition}{Definition}

\theoremstyle{empty}

\begin{document}

\title{Conjugates of integral functionals on continuous functions}
\author{Ari-Pekka Perkki\"o\thanks{Department of Mathematics, Ludwig-Maximilians-Universit\"at M\"unchen, Theresienstr. 39, 80333 M\"unchen, Germany. The author is grateful to the V\"ais\"al\"a Foundation for the financial support.}}

\maketitle

\begin{abstract}
This article gives necessary and sufficient conditions for the dual representation of Rockafellar in [Integrals which are convex functionals. II, Pacific J. Math., 39:439-–469, 1971] for integral functionals on the space of continuous functions.
\end{abstract}

\noindent\textbf{Keywords.}  integral functional; convex conjugate; set-valued mapping; inner semicontinuity

\section{Introduction}

Let $ T$ be a $\sigma$-compact locally compact   Hausdorff ($\sigma$-lcH) with the topology $\tau$, the Borel-$\sigma$-algebra $\B( T)$ and a strictly positive regular measure $\mu$. The space of $\reals^d$-valued continuous functions $C:=C( T;\reals^d)$, that vanish at infinity, is a Banach space with respect to the supremum norm. The Banach dual of $C$ can be identified with the space $M$ of $\reals^d$-valued signed regular measures with finite total variation on $ T$ via the bilinear form
\[
\langle y,\theta\rangle := \int yd\theta;
\]
see \cite[Section~7]{fol99}.

This article studies conjugates of convex integral functionals on $C$. Such functionals appear in numerous applications in optimal control, variational problems over functions of bounded variation, optimal transport, financial mathematics, and in extremal value theory. Applications to singular stochastic control and finance are given in \cite{pp15a}.

Our main theorems sharpen those of \cite{roc71} and \cite{per14} by giving necessary and sufficient conditions for the conjugacy of integral functionals on $C$ and functionals on the space of measures with integral representation involving the recession function of the conjugate integrand. For closely related results, we refer to \cite{ab88,bv88} and the references therein. Our characterizations are new in the literature and they complete the idea initiated in \cite{roc71} on expressing the singular parts of the conjugates on the space of measures in terms of recession functions. 

The paper starts by  sharpening \cite[Theorem~6]{roc71} on the integral representation of the support function of continuous selections of a convex-valued mapping. We show that inner semicontinuity of the mapping is not only sufficient but also a necessary condition for the validity of the representation. Section~\ref{sec:main} gives the main results on integral functionals. The conditions appearing in the main results are analyzed further in Section~\ref{sec:suf2}.

\section{Inner semicontinuity and continuous selections}\label{sec:isc}

Let $S$ be a set-valued mapping from $ T$ to $\reals^d$ and denote the set of its continuous selections by
\[
C(S) := \{y\in C\mid y_t\in S_t\ \forall t\in T\}.
\]
This section sharpens \cite[Theorem~6]{roc71} on the integral representation of the {\em support function}
\[
\sigma_{C(S)}(\theta) := \sup_{y\in C(S)}\langle y,\theta\rangle
\]
of $C(S)$. Theorem~6 of \cite{roc71} states that if $S$ is ``inner semicontinuous'' and $ T$ is compact, then
\[
\sigma_{C(S)}(\theta)=\int \sigma_{S}(d\theta/d|\theta|)d|\theta|:=\int \sigma_{S_t}((d\theta/d|\theta|)_t)d|\theta|_t,
\]
where $|\theta|$ denotes the total variation of $\theta$ and $\sigma_{S_t}$ is the normal integrand defined pointwise by
\[
\sigma_{S_t}(x) := \sup_{y\in S_t}x\cdot y.
\]
Theorem~\ref{thm:sf} below extends the result to general $\sigma$-lcH $ T$ and shows that inner semicontinuity of $S$ is a necessary condition for the validity of the integral representation.

The proof of sufficiency is analogous of that of Rockafellar, but we repeat it here for completeness since we do not assume compactness of $ T$. To the best of our knowledge, the necessity of the inner semicontinuity has not been recorded in the literature before.

We denote the neighborhoods of $t\in T$ by $\H_t$ and the open neighborhoods of $x\in\reals^d$ by $\H_x^o$. The mapping $S$ is {\em inner semicontinuous (isc) at $t$} if $S_t \subseteq\liminf S_t$, where
\[
\liminf S_t :=\{x\in\reals^d\mid S^{-1}(A)\in\H_t\ \forall A\in\H^o_x \}.
\]
When $S$ is isc at every $t$, $S$ is {\em inner semicontinuous}. By \cite[Proposition 2.1]{mic56}, $S$ is isc if and only if the {\em preimage} $S^{-1}(O):=\{t\in T \mid S_t\cap O\ne\emptyset\}$ of every open set $O$ is open. Inner semicontinuity is studied in detail in \cite[Section~5]{rw98} in the case where $T$ is Euclidean. We often use Michael's continuous selection theorem \cite[Theorem 3.1''\!']{mic56} that is valid for a Lindel\"of perfectly normal $T_1$-space $T$. We refer to \cite{dug66} for the definitions, and note here that Hausdorff spaces are $T_1$ and $\sigma$-compactness of $ T$ implies that every subset of $ T$ is Lindel\"of, so $ T$ is a Lindel\"of perfectly normal $T_1$-space by \cite[Theorem~XI.6.4 and Problem~VIII.6.7]{dug66}.

Given a set $A$, $N_A(y)$ denotes the set of its {\em normals} at $y$, that is, $\theta\in N_{A}(y)$ if $y\in A$ and 
\[
\langle \theta,y'-y\rangle\le 0 \quad\forall y'\in A,
\]
while $N_A(y):=\emptyset$ for $y\notin A$. For a set-valued mapping $S: T\tos\reals^d$ and a function $y: T\to\reals^d$, we denote the set-valued mapping $t\mapsto N_{S_t}(y_t)$ by $N_S(y)$. The mapping $t\mapsto \cl S_t$ is called the {\em image closure} of $S$.

\begin{theorem}\label{thm:sf}
Assume that $S$ is a convex-valued mapping from $ T$ to $\reals^d$ such that $C(S)\ne\emptyset$. Then 
\[
\sigma_{C(S)}(y)=\int \sigma_{S}(d\theta/d|\theta|)d|\theta|
\]
 if and only if $S$ is inner semicontinuous. In this case $\theta\in N_{C(S)}(y)$ if and only if $d\theta/d|\theta|\in N_S(y)$ $|\theta|$-a.e.
\end{theorem}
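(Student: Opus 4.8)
The plan is to prove the integral representation by establishing the two inequalities separately, to deduce necessity of inner semicontinuity by contraposition, and then to read off the normal cone characterization from the representation itself. Throughout write $p:=d\theta/d|\theta|$, so that $|p_t|=1$ for $|\theta|$-a.e.\ $t$. The inequality $\sigma_{C(S)}(\theta)\le\int\sigma_S(p)\,d|\theta|$ holds with no hypothesis on $S$: for every $y\in C(S)$ one has $y_t\in S_t$, hence $p_t\cdot y_t\le\sigma_{S_t}(p_t)$ pointwise, and integrating against $|\theta|$ and taking the supremum over $y$ yields the bound. The content of the representation is therefore the reverse inequality, and this is where inner semicontinuity enters.

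For the reverse inequality (sufficiency) I would follow Rockafellar's selection argument. Fix $\eps>0$. Using Lusin's theorem together with inner regularity of $|\theta|$, choose a compact $K$ with $|\theta|(T\setminus K)$ small on which $p$ is continuous, and extend $p|_K$ to a continuous $\tilde p:T\to\uball$ by Tietze's theorem. The key step is to show that the ``$\eps$-argmax'' mapping $t\mapsto\{x\in S_t\mid \tilde p_t\cdot x>\sigma_{S_t}(\tilde p_t)-\eps\}$, suitably truncated to keep values bounded and to avoid points where $\sigma_{S_t}(\tilde p_t)=+\infty$, is again inner semicontinuous with nonempty convex values: inner semicontinuity of $S$ makes $t\mapsto\sigma_{S_t}(\tilde p_t)$ lower semicontinuous, and it is preserved under intersection with the open, continuously varying condition $\tilde p_t\cdot x>\sigma_{S_t}(\tilde p_t)-\eps$. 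Michael's continuous selection theorem (valid since $T$ is Lindel\"of perfectly normal $T_1$) then produces $y\in C(S)$ with $p_t\cdot y_t>\sigma_{S_t}(p_t)-\eps$ on $K$; integrating over $K$, controlling the contribution of $T\setminus K$, and letting $\eps\downarrow 0$ and $|\theta|(T\setminus K)\to 0$ gives $\langle y,\theta\rangle\to\int\sigma_S(p)\,d|\theta|$, the case $\int\sigma_S(p)\,d|\theta|=+\infty$ being handled by the same construction producing selections whose pairing diverges. I expect the main obstacle to be exactly the verification that the truncated $\eps$-argmax mapping is inner semicontinuous and has the closed convex nonempty values required by Michael's theorem, since $S$ is not assumed closed-valued and $\sigma_{S_t}(\tilde p_t)$ may be infinite.

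For necessity I would argue by contraposition using point masses. Suppose $S$ is not isc at some $t_0$, witnessed by $x_0\in S_{t_0}$ and an open $A\ni x_0$ with $S^{-1}(A)\notin\H_{t_0}$. Let $V:=\cl\{y_{t_0}\mid y\in C(S)\}$, which is nonempty (as $C(S)\ne\emptyset$), closed, and convex (as $S$ is convex-valued). I claim $x_0\notin V$: otherwise some $y\in C(S)$ has $y_{t_0}\in A$, and continuity of $y$ forces $y_t\in A$, hence $S_t\cap A\ne\emptyset$, on a neighborhood of $t_0$, making $S^{-1}(A)$ a neighborhood of $t_0$, a contradiction. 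By strong separation of the point $x_0$ from the closed convex set $V$ there is $c\in\reals^d$ with $c\cdot x_0>\sigma_V(c)=\sup_{y\in C(S)}c\cdot y_{t_0}$. Taking $\theta:=c\,\delta_{t_0}\in M$ gives $\sigma_{C(S)}(\theta)=\sigma_V(c)<c\cdot x_0\le\sigma_{S_{t_0}}(c)=\int\sigma_S(d\theta/d|\theta|)\,d|\theta|$, so the representation fails; equivalently, validity of the representation forces inner semicontinuity.

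Finally, the normal cone characterization is immediate once the representation is available. For $y\in C(S)$ the integrand $\sigma_{S_t}(p_t)-p_t\cdot y_t$ is nonnegative, and $\langle y,\theta\rangle=\int p_t\cdot y_t\,d|\theta|=\int\sigma_{S_t}(p_t)\,d|\theta|=\sigma_{C(S)}(\theta)$ if and only if this integrand vanishes $|\theta|$-a.e., i.e.\ $p_t\cdot y_t=\sigma_{S_t}(p_t)$, which is precisely $p_t\in N_{S_t}(y_t)$ $|\theta|$-a.e. Since $\theta\in N_{C(S)}(y)$ is equivalent to $y\in C(S)$ together with $\langle y,\theta\rangle=\sigma_{C(S)}(\theta)$, this yields the stated equivalence; the degenerate case $\int\sigma_S(p)\,d|\theta|=+\infty$ is consistent because then neither side can hold, the finite pairing $\langle y,\theta\rangle$ ruling out $\theta\in N_{C(S)}(y)$ while a.e.\ pointwise optimality would force the integral to be finite.
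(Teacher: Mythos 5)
Your treatment of the easy inequality (pointwise Fenchel inequality integrated against $|\theta|$), of the necessity of inner semicontinuity (a Dirac mass at a point of failure together with separation of $x_0$ from the closed convex set $\cl\{y_{t_0}\mid y\in C(S)\}$), and of the normal-cone characterization (pointwise Fenchel equalities) all match the paper's argument. The genuine gap is in the sufficiency direction, exactly at the step you yourself flag as the main obstacle: the $\eps$-argmax mapping $t\mapsto\{x\in S_t\mid \tilde p_t\cdot x>\sigma_{S_t}(\tilde p_t)-\eps\}$ is \emph{not} inner semicontinuous in general, and the justification you offer is incorrect. Inner semicontinuity of $S$ does make $t\mapsto\sigma_{S_t}(\tilde p_t)$ lower semicontinuous, but for the condition $\tilde p_t\cdot x>\sigma_{S_t}(\tilde p_t)-\eps$ to define an open subset of $T\times\reals^d$ (which is what the intersection rule in Theorem~\ref{thm:iop}(5) requires) you would need $t\mapsto\sigma_{S_t}(\tilde p_t)$ to be \emph{upper} semicontinuous. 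Concretely, take $T=\reals$, $d=1$, $S_0=\{0\}$, $S_t=[0,1/|t|]$ for $t\ne0$, $\tilde p\equiv1$: $S$ is isc, yet the $\eps$-argmax equals $\{0\}$ at $t=0$ and $(1/|t|-\eps,\,1/|t|]$ for $t\ne0$, which escapes to infinity; this mapping is not isc at $0$ and admits no continuous selection, and truncating $S$ to a ball $\uball_R$ merely replaces $(1/|t|-\eps,1/|t|]$ by $(R-\eps,R]$ without restoring inner semicontinuity. So Michael's theorem cannot be applied to your mapping. (Closedness of the values, which you also worry about, is the lesser issue: in $\reals^d$ Michael's Theorem 3.1$'''$ covers arbitrary convex values.)

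The paper circumvents this by never asking a continuous selection to be nearly optimal pointwise against a moving target. It first invokes the interchange rule \cite[Theorem~14.60]{rw98} to produce a single \emph{bounded measurable} selection $w\in L^\infty(S)$ with $\int w\,d\theta$ close to $\int\sigma_S(d\theta/d|\theta|)\,d|\theta|$, applies Lusin's theorem to $w$ (rather than to $d\theta/d|\theta|$) to obtain a compact set $\bar O^C$ on which $w$ is continuous, and then applies Michael's theorem to the glued mapping equal to the singleton $\{w_t\}$ on $\bar O^C$ and to $S_t$ on the open complement --- a mapping that \emph{is} isc. A partition of unity mixing the resulting selection with a fixed $\bar y\in C(S)$ then controls the contribution off the compact set, a point your sketch also leaves open, since the sup-norm of your selection is not controlled uniformly as $\eps\downarrow0$ and $|\theta|(T\setminus K)\to0$. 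To repair your route you would have to replace the moving threshold $\sigma_{S_t}(\tilde p_t)-\eps$ by a \emph{continuous} function lying strictly below $\sigma_{S_t}(\tilde p_t)$ everywhere and having nearly the same integral over $K$; constructing such a function essentially reintroduces the measurable-selection step you were trying to avoid.
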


\begin{proof}
We first prove sufficiency. The mapping defined by 
\[
\Gamma_t:=\cl\{y_t\mid y\in C(S)\}
\]
is contained in $S$, and, since a mapping is isc if and only if its image closure is isc, $\Gamma$ is isc by part 7 of Theorem~\ref{thm:iop} below. Assuming that $S$ is not inner semicontinuous, there is $t$ such that $\Gamma_t$ is strictly smaller than $S_t$. By a separating hyperplane theorem, there is $x$ with $\sup_{v\in\Gamma_t} x\cdot v < \sup_{v\in S_t} x\cdot v$. Defining $\theta\in M$ so that $|\theta|$ is a Dirac measure at $t$ and $d\theta/d|\theta|=x$, we can write this as
\[
\sup_{y\in C(S)}\langle y,\theta\rangle< \int \sigma_{S}(d\theta/d|\theta|)d|\theta|,
\]
which is a contradiction.

Now we turn to necessity. By the Fenchel inequality, 
\begin{align}\label{eq:fen1}
\langle y,\theta\rangle \le \int \sigma_{S}(d\theta/d|\theta|)d|\theta|
\end{align}
for every $y\in C(S)$, so it suffices to show
\[
\sup_{y\in C(S)} \langle y,\theta\rangle\ge \int \sigma_{S}(d\theta/d|\theta|)d|\theta|.
\]
Denoting $L^\infty(S)=\{w\in L^{\infty}( T,\theta)\mid w_t\in S_t\ \forall t\}$, we have, by \cite[Theorem 14.60]{rw98}, 
\begin{align*}
  \sup_{w\in L^\infty(\theta;S)} \int wd\theta =\int \sigma_{S}(d\theta/d|\theta|)d|\theta|.
\end{align*}
Let $\bar y\in C(D)$,
\[
\alpha< \int \sigma_{S}(d\theta/d|\theta|)d|\theta|
\]
and
$w\in L^\infty(S)$ be such that $\int wd\theta>\alpha$. By regularity of $\theta$ and Lusin's theorem \cite[Theorem 7.10]{fol99}, there is an open $\bar O\subset\T$ such that $\int_{\bar O} (|\bar y|+ |w_t|)d|\theta|<\epsilon/2$, $\bar O^C$ is compact and $w$ is continuous relative to $\bar O^C$. The mapping
\[
\Gamma_t=
\begin{cases}
	w_t\quad&\text{if } t\in \bar O^C\\
	S_t\quad&\text{if }t\in O
\end{cases}
\]
is isc convex closed nonempty-valued so that, by \cite[Theorem 3.1''\!']{mic56}, there is a $y^w\in C$ with $y^w_t=w_t$ on $\bar O^C$ and $y^w_t\in S_t$ for all $t$. Since $|\theta|$ is regular,  there is an open $\hat O\supset \bar O^C$ such that $\int_{\hat O\backslash \bar O^C} |y^w_t|d|\theta|<\epsilon/2$. Since $\bar O^C$ is compact and $T$ is locally compact, we may choose $\hat O$ precompact, by \cite[Theorem XI.6.2]{dug66}. 

Since $\hat O$ and $\bar O$ form an open cover of $ T$ and since $ T$ is normal, there is, by \cite[Theorem 36.1]{mun00}, a continuous partition of unity $(\hat \alpha,\bar\alpha)$ subordinate to $(\hat O,\bar O)$. Defining $y:=\hat \alpha y^w+\bar \alpha \bar y$, we have $y\in C(S)$ and
\begin{align*}
	\int yd\theta &\ge \int_{\bar O^C} wd\theta-\int_{\hat O\backslash \bar O^C} \hat\alpha |y^w|d|\theta|-\int_{\bar O}\bar\alpha |\bar y|d|\theta| \ge\int \alpha-\epsilon,
\end{align*}
which finishes the proof of necessity, since $\alpha<\int \sigma_{S}(d\theta/d|\theta|)d|\theta|$ was arbitrary.

As to the normal vectors, $\theta\in N_{C(D)}(y)$ if and only if  $\sigma_{C(D)}(y)=\langle \theta,y\rangle$ which, by the Fenchel inequality \eqref{eq:fen1}, is equivalent to 
\[
\sigma_{S}(d\theta/d|\theta|)d|\theta| =\int y\cdot (d\theta/d|\theta|) d|\theta|
\]
which is equivalent to condition in the statement, since we also have the pointwise Fenchel inequalities
\[
\sigma_{S_t}(x)\ge y_t\cdot x\quad\forall x\in\reals^d
\]
that are satisfied as equalities if and only if $x\in N_{S_t}(y_t)$.
\end{proof}

The next theorem lists some useful criteria for the preservation of inner semicontinuity in operations. Given a set $A$, $\rinterior A$ denotes its relative interior.

\begin{theorem}\label{thm:iop}
If $S,S^1$ and $S^2$ are isc mappings, then
\begin{enumerate}
\item
$AS$ is isc for any continuous $A:\reals^n\to\reals^m$,
\item
$\co S$ is isc,
\item
$S^1\times S^2$ is isc,
\item
$S^1+S^2$ is isc,
\item
$t\mapsto S^1_t\cap S^2_t$ is isc provided $G:=\{(t,x)\mid x\in S^2_t\}$ is open.
\item
$t\mapsto S^1_t\cap S^2_t$ is isc provided $ T$ is Euclidean and $S^1$ and $S^2$ are convex-valued with $0\in\inte(S^1_t-S^2_t)$ for all $t$,
\item 
Arbitrary pointwise unions of isc mappings are isc.
\end{enumerate}
\end{theorem}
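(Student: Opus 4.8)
The unifying tool is the preimage characterization of inner semicontinuity recorded above via \cite[Proposition 2.1]{mic56}: a mapping $R$ is isc if and only if $R^{-1}(O)$ is open for every open $O$. I would verify parts 1, 3, 4 and 7 directly from this criterion. For part 1, continuity of $A$ makes $A^{-1}(O)$ open, and a pointwise check gives $(AS)^{-1}(O)=S^{-1}(A^{-1}(O))$, which is open since $S$ is isc. For part 3, writing an open set $O\subseteq\reals^{n_1+n_2}$ as a union of open boxes $O^\alpha_1\times O^\alpha_2$ yields $(S^1\times S^2)^{-1}(O)=\bigcup_\alpha\big[(S^1)^{-1}(O^\alpha_1)\cap (S^2)^{-1}(O^\alpha_2)\big]$, a union of open sets. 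Part 4 then follows by noting $S^1+S^2=A(S^1\times S^2)$ for the continuous addition map $A(x,y)=x+y$ and combining parts 1 and 3. Part 7 is immediate: for a pointwise union $S=\bigcup_\alpha S^\alpha$ one has $S^{-1}(O)=\bigcup_\alpha (S^\alpha)^{-1}(O)$.

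For part 2 I would argue at the level of the $\liminf$ definition. Given $x\in\co S_t$, Carath\'eodory's theorem writes $x=\sum_i\lambda_i x_i$ as a convex combination of finitely many points $x_i\in S_t$. For an open neighborhood $A$ of $x$, continuity of the affine-combination map yields open neighborhoods $A_i$ of $x_i$ with $\sum_i\lambda_i A_i\subseteq A$; since $S$ is isc, each $S^{-1}(A_i)$ is a neighborhood of $t$, and on their finite intersection one can select $v_i\in S_s\cap A_i$, whence $\sum_i\lambda_i v_i\in\co S_s\cap A$. Thus $(\co S)^{-1}(A)$ is a neighborhood of $t$, giving $x\in\liminf\co S_t$. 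Part 5 uses the open graph directly: if $\bar x\in S^1_t\cap S^2_t\cap O$, openness of $G$ supplies a box $U\times V\subseteq G$ with $(t,\bar x)\in U\times V$ and $V\subseteq O$; isc of $S^1$ makes $(S^1)^{-1}(V)$ a neighborhood of $t$, and for $s\in U\cap (S^1)^{-1}(V)$ any $x\in S^1_s\cap V$ lies in $S^2_s$ (as $(s,x)\in G$) and in $O$, so $s\in (S^1\cap S^2)^{-1}(O)$.

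The main obstacle is part 6, where the open-graph shortcut is unavailable and the interior condition must be exploited quantitatively. Since $T$ is Euclidean I would switch to the sequential form of inner semicontinuity: it suffices, for every $t_\nu\to t$ and $\bar x\in S^1_t\cap S^2_t$, to produce $x_\nu\in S^1_{t_\nu}\cap S^2_{t_\nu}$ with $x_\nu\to\bar x$. First, isc of $S^1$ and $S^2$ supplies selections $a_\nu\in S^1_{t_\nu}$ and $b_\nu\in S^2_{t_\nu}$ with $a_\nu\to\bar x$ and $b_\nu\to\bar x$. Next, the mapping $S^1-S^2$ is convex-valued and isc (by parts 1 and 4) with $0\in\inte(S^1_t-S^2_t)$; a simplex argument shows that interior points of a convex isc mapping are locally stable, so a fixed ball $r\uball\subseteq S^1_{t_\nu}-S^2_{t_\nu}$ persists for large $\nu$.

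Finally I would invoke a Robinson--Ursescu / Hoffman-type metric-regularity estimate for the convex system $x\in S^1_{t_\nu}$, $x\in S^2_{t_\nu}$: the uniform ball furnishes a uniform regularity constant $\kappa$ with $d(a_\nu,S^1_{t_\nu}\cap S^2_{t_\nu})\le\kappa\, d(a_\nu,S^2_{t_\nu})\le\kappa|a_\nu-b_\nu|\to 0$, which yields the desired $x_\nu\to\bar x$. The delicate point is producing the uniform constant $\kappa$, localizing to a neighborhood of $\bar x$ so that possibly unbounded $S^i_{t_\nu}$ cause no trouble; alternatively, one can cite the corresponding intersection result in \cite[Section~5]{rw98}.
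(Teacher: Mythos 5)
Parts 1, 3, 4, 5 and 7 of your argument coincide with the paper's proof (preimage criterion, product/box decomposition, composition with addition, the open-graph box argument, and the identity $(\bigcup_\alpha S^\alpha)^{-1}(O)=\bigcup_\alpha (S^\alpha)^{-1}(O)$). For part 2 you argue directly: Carath\'eodory plus continuity of $(y_1,\dots,y_m)\mapsto\sum_i\lambda_i y_i$, selecting $v_i\in S_s\cap A_i$ on the finite intersection $\bigcap_i S^{-1}(A_i)$. That is correct; the paper instead writes $\co S$ as the pointwise union of the mappings $\sum_i\alpha^i S$ and reduces part 2 to parts 1, 4 and 7. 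The two are essentially the same computation packaged differently, and yours is self-contained.

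The only genuine issue is part 6. The paper simply cites the intersection theorem for convergent convex sets \cite[Theorem~4.32]{rw98} together with the sequential characterization of isc \cite[Exercise~5.6]{rw98}; your fallback option is therefore exactly the paper's proof. Your self-contained version, however, has a gap at the step ``the uniform ball furnishes a uniform regularity constant $\kappa$ with $d(a_\nu,S^1_{t_\nu}\cap S^2_{t_\nu})\le\kappa\,d(a_\nu,S^2_{t_\nu})$.'' The inclusion $r\uball\subseteq C_\nu-D_\nu$ alone does \emph{not} yield a $\kappa$ that is uniform over $\nu$, even for $a_\nu$ in a fixed bounded set: take $C_\nu=\{(x,y):y\ge 0\}$ and $D_\nu=\{(x,y):y\le(x-1)/\nu\}$ in $\reals^2$, so that $C_\nu-D_\nu=\reals^2$, $a_\nu=(0,0)\in C_\nu$, $b_\nu=(0,-1/\nu)\in D_\nu$, yet $C_\nu\cap D_\nu\subseteq\{x\ge 1\}$ stays at distance $\ge 1$ from $a_\nu$. (This does not contradict the theorem because here $0\notin\inte(\liminf C_\nu-\liminf D_\nu)$.) Nor does ``localizing to a neighborhood of $\bar x$'' repair it: intersecting $S^i_{t_\nu}$ with a ball around $\bar x$ can destroy the inclusion $r\uball\subseteq C_\nu-D_\nu$ altogether. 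What actually closes the gap is that your simplex construction can be run with \emph{boundedly represented} differences: writing each vertex $v_i=c_i-d_i$ with $c_i\in S^1_t$, $d_i\in S^2_t$ and using isc of $S^1$ and $S^2$ separately gives $c_i^\nu\to c_i$, $d_i^\nu\to d_i$ with $\co\{c_i^\nu-d_i^\nu\}\supseteq r\uball$ for large $\nu$; then for $u=b_\nu-a_\nu$ one writes $r u/|u|=c-d$ with $c=\sum\lambda_ic_i^\nu$, $d=\sum\lambda_id_i^\nu$ bounded by some $R$, and the convex combination $p_s=(1-s)a_\nu+sc$, $q_s=(1-s)b_\nu+sd$ meets at $s=|u|/(|u|+r)$, giving $d(a_\nu,C_\nu\cap D_\nu)\le (2R/r)|a_\nu-b_\nu|$. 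With that replacement your part 6 is complete; as written, the quantitative step would fail.
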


\begin{proof}
1 follows from $(AS)^{-1}(O)=S^{-1}(A^{-1}(O))$, since then preimages of open sets are open. 3 can be derived directly from the definition. Choosing $A(v^1,v^2)=v^1+v^2$ and $S=S^1\times S^2$, 4 follows from 1 and 3. 

To prove 5, let $A\subset\reals^d$ be open, $t\in (S^1\cap S^2)^{-1}(A)$ and $x\in  S^1_t\cap S^2_t\cap A$. The set $G\cap(T\times A)$ is open and contains $(t,x)$, so there is $A'\in\H_x^o$ and $O\in\H_t^o$ such that $A'\subset A$ and $O\times A'\subset G$. Then $O\cap (S^1)^{-1}(A')\subseteq (S^1\cap S^2)^{-1}(A)$, so $(S^1\cap S^2)^{-1}(A)\in \H_t$. Thus $(S^1\cap S^2)^{-1}(A)$ is open. 

To prove 6, let $t^\nu\to t$.  By \cite[Theorem~4.32]{rw98},
\[
\liminf (S^1_{t^\nu}\cap S^2_{t^\nu})\supseteq  \liminf S^1_{t^\nu}\cap \liminf S^2_{t^\nu}.
\]
Taking the intersection over all sequences $t^\nu\to t$ proves the claim, by \cite[Exercise 5.6]{rw98}.

To prove 2, notice that $\co S$ is an union of all the mappings of the form $\sum_{i=1}^m \alpha^i S$ for $\sum_{i=1}^m\alpha^i=1$, $\alpha^i\ge 0$, so the result follows from 1,4 and 7. It remains to prove 7. For any mappings $(S^\alpha)$, $(\bigcup_\alpha S^\alpha)^{-1}(A)= \bigcup ((S^{\alpha})^{-1}(A))$. Thus arbitrary unions of isc mappings is isc, since preimages of open sets are open. 
\end{proof}

\section{Integral functionals of continuous functions}\label{sec:main}

This section studies conjugates of functionals of the form
\[
I_h+\delta_{C(D)}:C\to\ereals,
\]
where $h$ is a convex normal $\B( T)$-integrand on $\reals^d$,
\[
I_h(v) :=\int h_t(v_t)d\mu_t
\]
and $D_t:=\cl\dom h_t$. Here $\dom g :=\{x\mid g(x)<\infty\}$ is the {\em domain of $g$} and $\cl$ denotes the closure operation. Recall that $h$ is a {\em convex normal $\B( T)$-integrand on $\reals^d$} if its {\em epigraphical mapping} $t\mapsto \epi h_t:=\{(v,\alpha)\in\reals^d\times\reals\mid h_t(v)\le \alpha\}$ is closed convex-valued and measurable. A set-valued mapping $S$ is {\em measurable} if preimages of open sets are measurable. We refer the reader to \cite[Chapter~14]{rw98} for a general study of measurable set-valued mappings and normal integrands.

Note that, when $h_t(v)=\delta_{S_t(v)}$, we have $I_h+\delta_{C(D)}=\delta_{C(D)}$ so we are in the setting of Section~\ref{sec:isc}. On the other hand, when $\dom I_h\subseteq C(D)$, we have $I_h+\delta_{C(D)}=I_h$. Sufficient conditions for the above inclusion can be found in \cite{roc71}, \cite{per14} and Section~\ref{sec:suf2} below.

Recall that the {\em conjugate} of an extended real-valued function $F$ on $C$ is the extended real-valued lower semicontinuous (lsc) convex function on $M$ defined by
\[
F^*(\theta):=\sup_{y\in C}\{\langle y,\theta\rangle-F(y)\}.
\]
Rockafellar~\cite{roc71} and more recently Perkki\"o~\cite{per14} gave conditions for the validity of the integral representation
\begin{equation*}
(I_h)^*=J_{h^*},
\end{equation*}
where the functional $J_{h^*}:M\to\ereals$ is defined by
\[
J_{h^*}(\theta)=\int h^*_t((d\theta^a/d\mu)_t)d\mu_t+\int (h^*_t)^\infty((d\theta^s/d|\theta^s|)_t)d|\theta^s|_t,
\]
where $\theta^a$ and $\theta^s$ denote the absolutely continuous and the singular part, respectively, of $\theta$ with respect to $\mu$, and $(h_t^*)^\infty$ denotes the recession function of $h^*_t$. Recall that the {\em recession function} of a proper lsc convex function $g$ is given by
\[
g^\infty(x) = \sup_{\alpha>0}\frac{g(\alpha x+\bar x)-g(\bar x)}{\alpha},
\]
where $\bar x\in\dom g$ is arbitrary; see \cite[Chapter~8]{roc70a}. We often use the identity 
\begin{equation}\label{eq:recsup}
g^\infty=\sigma_{\dom g^*}
\end{equation}
valid for proper lsc convex functions on locally convex topological vector spaces; see \cite[Corollary~3D]{roc66} or \cite[Theorem~13.3]{roc70a} for the finite dimensional case. The identity \eqref{eq:recsup} makes it clear that the properties of $J_{h^*}$ are related to those of $t\mapsto D_t$. 

The following lemma is the first part of the proof of \cite[Theorem~3]{per14}. It shows that the functional $I_h+\delta_{C(D)}$ arises naturally as the conjugate of $J_{h^*}$.

\begin{lemma}\label{lem:J_h}
If $\dom J_{h^*}\ne\emptyset$, then $J_{h^*}^*=I_h+\delta_{C(D)}$.
\end{lemma}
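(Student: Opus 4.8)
The plan is to evaluate the conjugate directly, $J_{h^*}^*(y)=\sup_{\theta\in M}\{\langle y,\theta\rangle-J_{h^*}(\theta)\}$ for $y\in C$, by decomposing each $\theta=\theta^a+\theta^s$ (Lebesgue decomposition with respect to $\mu$) in step with the two terms of $J_{h^*}$, and proving the two inequalities separately. The identity linking the singular term to the constraint set $C(D)$ is \eqref{eq:recsup}: since $h_t^{**}=h_t$ and $D_t=\cl\dom h_t$, it gives $(h^*_t)^\infty=\sigma_{\dom h_t}=\sigma_{D_t}$. I would first note that $\dom J_{h^*}\ne\emptyset$ supplies a $\theta_0$ whose density $w_0:=d\theta_0^a/d\mu\in L^1(\mu;\reals^d)$ satisfies $\int h^*_t((w_0)_t)\,d\mu_t<\infty$; in particular $h^*_t$, and hence $h_t=h_t^{**}$, is proper for $\mu$-a.e.\ $t$, so that $I_h>-\infty$ and no $\infty-\infty$ ambiguities occur below.

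For the bound $J_{h^*}^*\le I_h+\delta_{C(D)}$ it suffices to consider $y\in C(D)$ and $\theta\in\dom J_{h^*}$. With $w^a=d\theta^a/d\mu$ and $w^s=d\theta^s/d|\theta^s|$, the pointwise Fenchel inequality $y_t\cdot w^a_t-h^*_t(w^a_t)\le h_t(y_t)$ bounds the absolutely continuous contribution by $I_h(y)$, while on the singular part $y_t\cdot w^s_t-(h^*_t)^\infty(w^s_t)=y_t\cdot w^s_t-\sigma_{D_t}(w^s_t)\le0$, because $y_t\in D_t$. Hence $\langle y,\theta\rangle-J_{h^*}(\theta)\le I_h(y)$ and, taking the supremum, $J_{h^*}^*(y)\le I_h(y)=(I_h+\delta_{C(D)})(y)$; off $C(D)$ the inequality is vacuous.

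For the reverse inequality I would restrict the supremum to absolutely continuous $\theta=w\mu$ with $w\in L^1(\mu;\reals^d)$, which are regular and hence lie in $M$; for these $\langle y,w\mu\rangle-J_{h^*}(w\mu)=\int(y_t\cdot w_t-h^*_t(w_t))\,d\mu_t$. Applying the interchange of minimization and integration \cite[Theorem~14.60]{rw98} to the normal integrand $(t,w)\mapsto h^*_t(w)-y_t\cdot w$ on the decomposable space $L^1(\mu;\reals^d)$—the required finiteness of the infimum being guaranteed by $w_0$ above—yields $\sup_w\int(y_t\cdot w_t-h^*_t(w_t))\,d\mu_t=\int h_t(y_t)\,d\mu_t=I_h(y)$. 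Thus $J_{h^*}^*(y)\ge I_h(y)$ for every $y\in C$, which together with the previous paragraph settles the claim on $C(D)$ and wherever $I_h(y)=+\infty$.

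The only remaining case is $y\notin C(D)$ with $I_h(y)<\infty$, where I must produce $J_{h^*}^*(y)=+\infty$. Choosing $t_0$ with $y_{t_0}\notin D_{t_0}$, the inclusion $\dom h_t\subseteq D_t$ shows the violation set lies in $\{t\mid h_t(y_t)=+\infty\}$, which is $\mu$-null because $I_h(y)<\infty$ and $h$ is proper a.e.; hence $t_0$ is not a $\mu$-atom and $\delta_{t_0}$ is singular with respect to $\mu$. Separating $y_{t_0}$ from the closed convex set $D_{t_0}$ gives $x_0$ with $y_{t_0}\cdot x_0>\sigma_{D_{t_0}}(x_0)$ (finite), and testing with $\theta=\lambda x_0\delta_{t_0}$, whose singular term is $\lambda\sigma_{D_{t_0}}(x_0)$ by positive homogeneity of $(h^*_{t_0})^\infty=\sigma_{D_{t_0}}$, gives $\langle y,\theta\rangle-J_{h^*}(\theta)=\lambda(y_{t_0}\cdot x_0-\sigma_{D_{t_0}}(x_0))\to+\infty$. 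I expect the reverse inequality to be the crux: the lower bound $J_{h^*}^*\ge I_h$ hinges on a correct invocation of the interchange theorem, and the genuinely delicate point is that a pointwise violation $y_t\notin D_t$ can occur on a $\mu$-null set invisible to $I_h$ and must be detected through the singular term—which is exactly why the final step routes through a measure singular with respect to $\mu$ and leans on the recession identity \eqref{eq:recsup}, after checking that the violation point is non-atomic.
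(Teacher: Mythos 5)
Your overall route is the same as the paper's: split $\theta$ into its absolutely continuous and singular parts, use \cite[Theorem~14.60]{rw98} together with $h_t^{**}=h_t$ to recover $I_h(y)$ from the absolutely continuous part, and detect the constraint $y\in C(D)$ through measures singular with respect to $\mu$ via $(h^*_t)^\infty=\sigma_{D_t}$. The first two steps (the bound $J_{h^*}^*\le I_h+\delta_{C(D)}$ and the bound $J_{h^*}^*\ge I_h$) are correct as written.

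There is, however, a genuine gap in the final step. When $y\notin C(D)$ you test with $\theta=\lambda x_0\delta_{t_0}$ and assert $J_{h^*}(\theta)=\lambda\sigma_{D_{t_0}}(x_0)$. But the absolutely continuous part of this $\theta$ is zero, so by the definition of $J_{h^*}$ its value also contains the term $\int h^*_t(0)\,d\mu_t=\int(-\inf h_t)\,d\mu_t$, which need not vanish and can equal $+\infty$ even under the standing hypothesis $\dom J_{h^*}\ne\emptyset$. For instance, with $T=[0,1]$, $\mu$ Lebesgue, $h_t(v)=t^{-1/2}v$ for $t>0$ and $h_0=\delta_{\{0\}}$, one has $h_t^*=\delta_{\{t^{-1/2}\}}$, so $\dom J_{h^*}\ne\emptyset$ while $\int h_t^*(0)\,d\mu_t=+\infty$; for $y\equiv 1$ (which violates $y_0\in D_0$) your test measure gives $\langle y,\theta\rangle-J_{h^*}(\theta)=-\infty$ for every $\lambda$, so the supremum is not driven to $+\infty$ by these $\theta$ alone. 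The repair is immediate and is implicitly what the paper's chain of equalities does: superimpose the Dirac mass on a fixed $\theta_0\in\dom J_{h^*}$, i.e.\ test with $\theta=\theta_0+\lambda x_0\delta_{t_0}$. Since $\mu(\{t_0\})=0$, both $\theta_0^s$ and $x_0\delta_{t_0}$ are singular with respect to $\mu$, and the singular integral is positively homogeneous and subadditive as a functional of the singular measure, so
\[
\langle y,\theta\rangle-J_{h^*}(\theta)\ \ge\ \langle y,\theta_0\rangle-J_{h^*}(\theta_0)+\lambda\bigl(y_{t_0}\cdot x_0-\sigma_{D_{t_0}}(x_0)\bigr)\ \to\ +\infty.
\]
With this modification your argument is complete and matches the paper's in substance; the paper merely organizes it as a single computation in which the suprema over the absolutely continuous and singular parts are taken independently, which automatically keeps a finite absolutely continuous contribution in play.
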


\begin{proof}
We have
 \begin{align*}
	J_{h^*}^*(y)&=\sup_{\theta\in M}\left\{\int yd\theta-J_{h^*}(\theta)\right\}\nonumber \\
	&=\sup_{\theta'\in L^1(\mu;\reals^d)}\left\{\int  y\cdot \theta'd\mu-I_{h^*}(\theta')\right\}\nonumber \\
	&\quad+\sup_{\theta\in M,w\in L^1(|\theta^s|;\reals^d)}\left\{\int y\cdot wd|\theta^s|-\int (h^*)^\infty(w)d|\theta^s|\right\}\nonumber \\
	&=\begin{cases}
 	I_h(y)\quad&\text{if } y_t\in\cl\dom h_t\ \forall t,\\
	+\infty\quad&\text{otherwise}.
	\end{cases}
\end{align*}
Here the second equality follows from the positive homogeneity of  $(h^*_t)^\infty$, and the third follows by first applying \cite[Theorem~14.60]{rw98} on the second and the third line, where one uses \eqref{eq:recsup} and then takes the supremum over all purely atomic finite measures which are singular with respect to $\mu$.
\end{proof}

The next lemma combines \cite[Theorem~4]{roc71} with the condition $C(D)=\cl(\dom I_h\cap C(D))$. This condition is part of the sufficient conditions in the theorem below, which will be reduced to the situation in the lemma. We denote the closed ball centered at $v$ with radius $r$ by $\uball_r(v)$ and the interior of a set $A$ by $\interior A$. When $v=0$, we write simply $\uball_r$.

\begin{lemma}\label{lem:roc}
Assume that $D$ is isc, $C(D)=\cl(\dom I_h\cap C(D))$ and that $y\in \dom I_h$ and $r>0$ are such that $t\mapsto h_t(y_t+v)$ is finite and belongs to $L^1$ whenever $v\in\uball_r$. Then $(I_h+\delta_{C(D)})^*=J_{h^*}$.
\end{lemma}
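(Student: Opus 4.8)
The plan is to derive the claim from Lemma~\ref{lem:J_h} by a biconjugation argument, so that the whole problem reduces to the weak$^*$ lower semicontinuity of $J_{h^*}$. First I would record that the finiteness hypothesis yields $\dom J_{h^*}\ne\emptyset$: since $t\mapsto h_t(y_t+v)$ is finite and integrable for every $v\in\uball_r$, each $h_t$ is finite on a neighborhood of $y_t$ and hence subdifferentiable there, and a measurable selection $x_t\in\partial h_t(y_t)$ satisfies $h^*_t(x_t)=x_t\cdot y_t-h_t(y_t)$. The subgradient inequality bounds $r|x_t|$ by $\sup_{|v|\le r}h_t(y_t+v)-h_t(y_t)$, which is dominated by a finite sum of the integrable functions $t\mapsto h_t(y_t+w_j)$ over a finite point set whose convex hull covers $\uball_r$; thus $|x|\in L^1$, $\int h^*(x)\,d\mu<\infty$, and $x\,d\mu\in\dom J_{h^*}$. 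With $\dom J_{h^*}\ne\emptyset$, Lemma~\ref{lem:J_h} gives $J_{h^*}^*=I_h+\delta_{C(D)}$, and conjugating with respect to the $C$--$M$ pairing yields $(I_h+\delta_{C(D)})^*=J_{h^*}^{**}$. Since $J_{h^*}$ is proper convex (its integrands $h^*$ and $(h^*)^\infty$ are proper lsc convex), Fenchel--Moreau identifies $J_{h^*}^{**}$ with the largest $\sigma(M,C)$-lsc convex minorant of $J_{h^*}$; in particular $(I_h+\delta_{C(D)})^*\le J_{h^*}$ holds automatically, and equality holds as soon as $J_{h^*}$ is weak$^*$ lsc.

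So the entire burden is the reverse inequality $(I_h+\delta_{C(D)})^*(\theta)\ge J_{h^*}(\theta)$ for every $\theta\in M$, which I would prove by constructing near-optimal continuous selections. Fixing $\theta$ and $\alpha<J_{h^*}(\theta)$, I would split $J_{h^*}(\theta)$ into its absolutely continuous part $\int h^*(d\theta^a/d\mu)\,d\mu$ and its singular part. Using \eqref{eq:recsup} the latter equals $\int\sigma_D(d\theta^s/d|\theta^s|)\,d|\theta^s|$, which by Theorem~\ref{thm:sf} (applicable since $D$ is isc and convex-valued) equals $\sigma_{C(D)}(\theta^s)$; this supplies a continuous $D$-selection nearly attaining the singular contribution. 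For the absolutely continuous part I would invoke \cite[Theorem~4]{roc71} together with \cite[Theorem~14.60]{rw98}: the finiteness-in-a-ball assumption makes $I_h$ finite near $y$ and yields an essentially bounded measurable $D$-selection nearly attaining $\int h^*(d\theta^a/d\mu)\,d\mu - I_h(\cdot)$, and, after the regularity and Lusin reductions, a continuous one.

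The remaining and principal difficulty is to glue these two approximations into a single admissible $y\in\dom I_h\cap C(D)$ that simultaneously controls both integrals while keeping $I_h(y)$ finite. Here I would reproduce the mechanism of the proof of Theorem~\ref{thm:sf}: inner semicontinuity of $D$ and Michael's selection theorem \cite[Theorem 3.1''\!']{mic56} produce the continuous selections, and a continuous partition of unity subordinate to an open cover separating a large compact set from its complement patches the singular-part selection to the absolutely-continuous-part selection with arbitrarily small error against $|\theta|$. The domain condition $C(D)=\cl(\dom I_h\cap C(D))$ is exactly what allows this patched selection to be perturbed back into $\dom I_h$ without spoiling the estimate, and the finiteness of $h$ on $\uball_r(y)$ guarantees that such a perturbation changes $I_h$ by an arbitrarily small amount. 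Letting the patching error and then $\alpha\upto J_{h^*}(\theta)$ tend to their limits gives $(I_h+\delta_{C(D)})^*(\theta)\ge J_{h^*}(\theta)$, hence the weak$^*$ lower semicontinuity of $J_{h^*}$ and the asserted identity.
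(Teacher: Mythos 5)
Your opening reduction is sound, and you even verify $\dom J_{h^*}\ne\emptyset$ (which the paper leaves implicit); with Lemma~\ref{lem:J_h} this correctly reduces the lemma to the single inequality $(I_h+\delta_{C(D)})^*\ge J_{h^*}$. But your route for that inequality --- a primal construction of near-optimal continuous selections glued by a partition of unity --- is not the paper's, and as written it has a genuine gap at exactly the point you yourself flag as ``the principal difficulty''. The partition-of-unity mechanism of Theorem~\ref{thm:sf} controls only the pairing $\langle\cdot,\theta\rangle$; here you must simultaneously control $I_h$ of the glued function, and the reason you give for that control is not valid. The glued function lies only in $C(D)$, where $I_h$ may be $+\infty$ (the near-optimizer of $\int\sigma_D(d\theta^s/d|\theta^s|)\,d|\theta^s|$ is typically pushed toward $\bdry D_t$, where $h_t$ blows up), and the final appeal to ``the finiteness of $h$ on $\uball_r(y)$ guarantees that such a perturbation changes $I_h$ by an arbitrarily small amount'' is false: that hypothesis bounds $I_h$ only on a sup-norm ball around the \emph{fixed} $y$, not near an arbitrary element of $C(D)$, and a small sup-norm perturbation of a point where $I_h=+\infty$ into $\dom I_h$ can make $I_h$ finite but arbitrarily large (e.g.\ $h_t(v)=1/v+\delta_{(0,1]}(v)$). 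A workable version of your plan must run differently: use $C(D)=\cl(\dom I_h\cap C(D))$ \emph{first}, to choose the singular-part selection $y^2$ already in $\dom I_h\cap C(D)$ (legitimate because the domain condition gives $\sigma_{C(D)}=\sigma_{\dom I_h\cap C(D)}$), then localize the patch on an open set $O$ of small $\mu$-measure containing a $\mu$-null compact set carrying most of $|\theta^s|$ (possible since $\theta^s\perp\mu$ and both measures are regular), and make $\int_O h(y^2)\,d\mu$ small by absolute continuity of the integral of the integrable function $t\mapsto h_t(y^2_t)$. None of this is in your write-up, and the mechanism you do describe would fail; the same issue recurs, unaddressed, in your Lusin step for the absolutely continuous part.

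For comparison, the paper avoids constructing selections altogether: the ball hypothesis puts $y\in\interior\dom I_h\cap C(D)$ and activates \cite[Theorem~4]{roc71}, which expresses $I_h^*$ as an exact inf-convolution of $I_{h^*}$ with $\sigma_{\dom I_h}$; Fenchel duality (applicable because $\interior\dom I_h\cap C(D)\ne\emptyset$) then computes $(I_h+\delta_{C(D)})^*$, the support functions combine into $\sigma_{\dom I_h\cap C(D)}=\sigma_{C(D)}=J_{\sigma_D}$ by the domain hypothesis and Theorem~\ref{thm:sf}, and the pointwise identity $\min_z\{h^*(x-z)+(h^*)^\infty(z)\}=h^*(x)$ from \cite[Corollary~8.5.1]{roc70a} collapses the resulting expression to $J_{h^*}(\theta)$. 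If you wish to keep your reduction to lower semicontinuity of $J_{h^*}$, you should still splice in this duality computation in place of the gluing argument.
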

\begin{proof}
By \cite[Theorem~4]{roc71}, $y\in\interior\dom I_h\cap C(D)$ and
\begin{align*}
I_h^*(\theta) &=\min_{\theta'}\{I_{h^*}(d\theta'/d\mu)+\sigma_{\dom I_h}(\theta-\theta')\mid \theta'\ll \mu\}.
\end{align*}
Since $\interior\dom I_h\cap C(D)\ne\emptyset$, we may apply Fenchel duality \cite[Theorem 20]{roc74} to the sum $I_h+\delta_{C(D)}$. Thus
\begin{align*}
&(I_h+\delta_{C(D)})^*(\theta)\\
&=\min_{\theta''}\{\min_{\theta'}\{I_{h^*}(d\theta'/d\mu)+\sigma_{\dom I_h}(\theta-\theta'-\theta'')\mid \theta'\ll \mu\}+\sigma_{C(D)}(\theta'')\}\\
&=\min_{\theta'}\min_{\theta''}\{I_{h^*}(d\theta'/d\mu)+\sigma_{\dom I_h}(\theta-\theta'-\theta'')+\sigma_{C(D)}(\theta'')\mid \theta'\ll \mu\}\\
&=\min_{\theta'}\{I_{h^*}(d\theta'/d\mu)+\sigma_{\dom I_h\cap C(D)}(\theta-\theta')\mid \theta'\ll \mu\}\\
&=\min_{\theta'}\{I_{h^*}(d\theta'/d\mu)+J_{\sigma_{D}}(\theta-\theta')\mid \theta'\ll \mu\}\\
&=\min_{\theta'}\{\int h^*(d\theta'/d\mu)d\mu+\int (h^*)^\infty(d(\theta-\theta')/d\mu)d\mu\}\\
&\quad+\int (h^*)^\infty(d(\theta^s)/d|\theta^s|)d|\theta^s|.
\end{align*}
Here the third equality follows from another application of Fenchel duality, the fourth from the assumption $C(D)=\cl(\dom I_h\cap C(D))$ and from Theorem~\ref{thm:sf}. By \cite[Corollary 8.5.1]{roc70a}, the last minimum is attained at $d\theta'/d\mu=d\theta/d\mu$, so the last expression equals $J_{h^*}(\theta)$.
\end{proof}

The following theorem and its corollary  are the main results of the paper. The theorem gives necessary and sufficient conditions for the conjugacy of $I_h+\delta_{C(D)}$ and $J_{h^*}$. The corollary sharpens \cite[Theorem~5]{roc71} by giving necessary and sufficient conditions for the conjugacy between $I_h$ and $J_{h^*}$. 

Given a function $g$, $\partial g(y)$ denotes its subgradients at $y$, that is, $\theta\in \partial g(y)$ if
\[
 g(y)+\langle y'-y,\theta\rangle \le g(y')\quad \forall y'.
\]
Given a normal integrand $h$ and $y\in C$, we denote the set-valued mapping $t\mapsto\partial (h_t)(y_t)$ by $\partial h(y)$ and the mapping $t\mapsto N_{D_t}(y_t)$ by $\partial^sh(y)$.
\begin{theorem} \label{thm:main}
Assuming $I_h+\delta_{C(D)}$ and $J_{h^*}$ are proper, they are conjugates of each other if and only if $\dom h$ is isc and $C(D)=\cl(\dom I_h\cap C(D))$, and then $\theta\in\partial (I_h+\delta_{C(D)})(y)$ if and only if
\begin{align}\label{eq:sd}
\begin{split}
d\theta^a/d\mu&\in\partial h(y)\quad\mu\text{-a.e.},\\
d\theta^s/d|\theta^s| &\in\partial^sh(y)\quad|\theta^s|\text{-a.e.}
\end{split}
\end{align}
\end{theorem}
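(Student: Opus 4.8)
The plan is to prove the two implications separately and then derive the subdifferential characterization as a consequence of the equality of the functionals. Since $I_h+\delta_{C(D)}$ and $J_{h^*}$ are assumed proper, they are conjugate to each other precisely when one is the conjugate of the other, because conjugacy is a symmetric relation among proper lsc convex functions. By Lemma~\ref{lem:J_h}, $J_{h^*}^*=I_h+\delta_{C(D)}$ already holds whenever $\dom J_{h^*}\neq\emptyset$, which is guaranteed by properness of $J_{h^*}$. Hence the whole question reduces to: \emph{when does $(I_h+\delta_{C(D)})^*=J_{h^*}$ hold?} This is because, given properness and lower semicontinuity, $J_{h^*}=(I_h+\delta_{C(D)})^*$ is equivalent to the pair being mutual conjugates (taking conjugates once more and using $J_{h^*}^*=I_h+\delta_{C(D)}$ together with biconjugation $(I_h+\delta_{C(D)})^{**}=I_h+\delta_{C(D)}$).

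For sufficiency, I would assume $\dom h$ is isc and $C(D)=\cl(\dom I_h\cap C(D))$ and reduce to Lemma~\ref{lem:roc}. The reduction requires producing a point $y\in\dom I_h$ and radius $r>0$ with $t\mapsto h_t(y_t+v)$ finite and in $L^1$ for all $v\in\uball_r$; this is the standard localization/relaxation step, obtained by combining properness of $I_h+\delta_{C(D)}$ with the isc hypothesis on $\dom h$ (note $D=\cl\dom h$, so $D$ isc follows from $\dom h$ isc by the image-closure remark used in the proof of Theorem~\ref{thm:sf}). Once such a $y$ and $r$ are in hand, Lemma~\ref{lem:roc} delivers exactly $(I_h+\delta_{C(D)})^*=J_{h^*}$.

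For necessity, I would assume $(I_h+\delta_{C(D)})^*=J_{h^*}$ and derive the two conditions. The condition $C(D)=\cl(\dom I_h\cap C(D))$ should follow by comparing the singular parts: the support function $\sigma_{C(D)}$ is recovered from $J_{h^*}$ via the recession/support identity \eqref{eq:recsup}, while $\sigma_{\cl(\dom I_h\cap C(D))}$ is the closure of $\dom I_h\cap C(D)$, and equality of conjugates forces these closures to agree. For the inner semicontinuity of $\dom h$, I would argue contrapositively, mirroring the necessity argument in Theorem~\ref{thm:sf}: if $\dom h$ (equivalently $D$) fails to be isc, then the integral representation of the relevant support functional breaks down at some atom, producing a measure $\theta$ where $(I_h+\delta_{C(D)})^*(\theta)$ and $J_{h^*}(\theta)$ disagree, contradicting conjugacy. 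The main obstacle here is that Theorem~\ref{thm:sf} gives necessity of isc for the \emph{pure support-function} representation $\sigma_{C(D)}=J_{\sigma_D}$, whereas here $h$ may be a genuinely finite integrand; the task is to isolate the singular/recession part of $J_{h^*}$, identify it with $\sigma_{C(D)}$ through $(h^*)^\infty=\sigma_D$, and thereby invoke the necessity half of Theorem~\ref{thm:sf} on $C(D)$.

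Finally, for the subdifferential characterization, I would use that $\theta\in\partial(I_h+\delta_{C(D)})(y)$ is equivalent to the Fenchel equality $(I_h+\delta_{C(D)})(y)+J_{h^*}(\theta)=\langle y,\theta\rangle$. Splitting $\theta=\theta^a+\theta^s$ and using the pointwise Fenchel inequalities $h_t((d\theta^a/d\mu)_t)^*+h_t(y_t)\ge y_t\cdot(d\theta^a/d\mu)_t$ and $(h_t^*)^\infty((d\theta^s/d|\theta^s|)_t)\ge y_t\cdot(d\theta^s/d|\theta^s|)_t$ (the latter being $\sigma_{D_t}$ evaluated against $y_t$, with equality iff $(d\theta^s/d|\theta^s|)_t\in N_{D_t}(y_t)$ exactly as in the normal-vector part of Theorem~\ref{thm:sf}), the global equality forces pointwise equalities $\mu$-a.e.\ and $|\theta^s|$-a.e.\ respectively. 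These pointwise equalities are precisely \eqref{eq:sd}, since equality in the Fenchel inequality for $h_t$ characterizes $\partial h_t(y_t)$ and equality for the recession/support function characterizes $N_{D_t}(y_t)=\partial^s h(y)_t$. The delicate point is the measurable-selection bookkeeping that passes from the integral equality to the almost-everywhere pointwise equalities, for which the interchange result \cite[Theorem~14.60]{rw98} already invoked in Lemma~\ref{lem:J_h} is the right tool.
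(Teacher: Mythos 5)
Your overall architecture (reduce everything to $(I_h+\delta_{C(D)})^*=J_{h^*}$ via Lemma~\ref{lem:J_h} and biconjugation, prove necessity through the recession identity and Theorem~\ref{thm:sf}, and read off the subdifferential formula from pointwise Fenchel equalities) matches the paper, and your necessity and subdifferential sketches are essentially the paper's arguments. But there is a genuine gap in the sufficiency direction. You propose to apply Lemma~\ref{lem:roc} directly to $h$, claiming that a point $y\in\dom I_h$ and a radius $r>0$ with $t\mapsto h_t(y_t+v)$ finite and in $L^1$ for all $v\in\uball_r$ can be "obtained by combining properness of $I_h+\delta_{C(D)}$ with the isc hypothesis on $\dom h$." This is false in general: properness and inner semicontinuity say nothing about $\dom h_t$ having nonempty interior. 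Already for $h_t=\delta_{\{0\}}$ the functional $I_h+\delta_{C(D)}=\delta_{\{0\}}$ is proper and $\dom h$ is isc, yet no such $y,r$ exist, so Lemma~\ref{lem:roc} is simply not applicable to $h$ itself. The hypothesis of Lemma~\ref{lem:roc} is an interiority condition that must be manufactured, not deduced.

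The paper's actual mechanism, which is the heart of the proof and is absent from your proposal, is an $\epsilon$-regularization: set $\e h_t(x):=\min_{x'\in\uball_\epsilon}h_t(x+x')$, so that ${\e h}^*=h^*+\epsilon|\cdot|$ and $\e D_t=\cl\dom h_t+\uball_\epsilon$ has uniformly fat domain. For $\e h$ the hypotheses of Lemma~\ref{lem:roc} do hold (any $\bar y\in\dom I_h\cap C(D)$ works with $r=\epsilon$, and one must also verify $C(\e D)=\cl(\dom I_{\e h}\cap C(\e D))$ via Michael's selection theorem), giving $(I_{\e h}+\delta_{C(\e D)})^*=J_{{\e h}^*}$. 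One then lets $\epsilon\searrow 0$, uses $J_{{\e h}^*}\searrow J_{h^*}$ pointwise, and invokes the Krein--Smulian-based Lemma~\ref{lem:mon} to conclude that $J_{h^*}$ is lsc, after which Lemma~\ref{lem:J_h} and biconjugation finish the argument. Without this regularization-and-limit step (or some substitute for it), your sufficiency proof does not go through.
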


\begin{proof}
To prove the necessity, assume that $I_h+\delta_{C(D)}$ and $J_{h^*}$ are conjugates of each other. By \eqref{eq:recsup}, $\sigma_{\cl\dom (I_h+\delta_{C(D)})}=J_{h^*}^\infty$, where, by the monotone convergence theorem, $J_{h^*}^\infty=J_{(h^*)^\infty}$. Here, by \eqref{eq:recsup} again,  $(h^*)^\infty=\sigma_{D}$. By Lemma~\ref{lem:J_h}, $J_{\sigma_D}^*=\delta_{C(D)}$, so  $\cl(\dom I_h\cap C(D))=C(D)$. In particular, $\sigma_{C(D)}=J_{\sigma_D}$, so $D$ is inner semicontinuous, by Theorem~\ref{thm:sf}, which shows the necessity.

To prove the sufficiency, it suffices, by Lemma~\ref{lem:J_h} and the biconjugate theorem \cite[Theorem~5]{roc74}, to show that $J_{h^*}$ is lsc. For $\epsilon>0$, we define
\[
\e h_t(x) :=\min_{x'\in\uball_\epsilon}h_t(x+x')
\]
and $\e D_t :=\cl\dom \e h_t$. By \cite[Proposition 14.47]{rw98}, $\e h$ is a convex normal integrand, and a direct calculation gives
\[
{\e h}^*_t(v)=h^*_t(v)+\epsilon|v|.
\]
When $\epsilon\searrow 0$,  both $I_{\e h}+\delta_{C(D_\epsilon)}\nearrow I_h+\delta_{C(D)}$ and $J_{\e h^*}\searrow J_{h^*}$ pointwise, so, $J_{h^*}$ is lsc by Lemma~\ref{lem:mon} in the appendix provided that $(I_{\e h}+\delta_{C(D_\epsilon)})^*=J_{(\e h)^*}$. To have this, we will apply Lemma~\ref{lem:roc} to $\e h$. 

We have $\e D=\cl\dom h_t+\uball_\epsilon$, so $\e D$ is isc, by Theorem~\ref{thm:iop}. To show that $C(\e D) =\cl(\dom I_{\e h}\cap C(\e D))$, let  $y\in C(\e D)$. For any $\nu=1,2,\dots$, the mapping $t\mapsto D_t\cap \interior\uball_{\epsilon(1+2^{-\nu-1})}(y_t)$ is convex nonempty-valued and isc, by Theorem~\ref{thm:iop}. By \cite[Theorem 3.1''\!']{mic56} and the assumption $ C(D) =\cl(\dom I_{h}\cap C(D))$, there is $\tilde y^\nu\in \dom I_h\cap C(D)$ such that $\|y-\tilde y^\nu\|\le\epsilon(1+2^{-\nu})$. Defining $y^\nu=(1-\epsilon 2^{-\nu})y+\epsilon 2^{-\nu}y^\nu$, we have $y^\nu \in C(\e D)$ and $y^\nu\rightarrow y$ in $C$. Since $\|y^\nu-\tilde y^\nu\|< \epsilon$, we get $I_{\e h}(y^\nu)\le I_h(\tilde y^\nu)$ and $y^\nu\in\dom I_{\e h}$. Thus $y\in\cl(\dom I_{\e h}\cap C(\e D)$. Finally, since $I_h$ and $J_{h^*}$ are proper, we have, for any $\bar y\in\dom I_h\cap C(D)$, that $t\mapsto \e h_t(\bar y_t+v)$ is finite and integrable whenever $v\in\uball_\epsilon$. Thus $\e h$ satisfies the assumptions of Lemma~\ref{lem:roc} which finishes the proof the sufficiency.

As to the subdifferential formulas, we have, for any $y\in\dom (I_h+\delta_{\cl\dom h})$ and $\theta\in M$, the Fenchel inequalities
\begin{align*}
h(y)+h^*(d\theta^a/d\mu)&\ge y\cdot(d\theta^a/d\mu)\quad\mu\text{-a.e.,}\\
(h^*)^\infty(d\theta^s/d|\theta^s|) &\ge y\cdot (d\theta^s/d|\theta^s|)\quad|\theta^s|\text{-a.e.}
\end{align*}
(the latter holds by \eqref{eq:recsup}), so $\theta\in\partial I_h(v)$ if and only if $I_h(y)+J_{h^*}(\theta)=\langle y,\theta\rangle$ which is equivalent to having the Fenchel inequalities satisfied as equalities which in turn is equivalent to the given pointwise subdifferential conditions.
\end{proof}

\begin{remark}
The properness of $I_h+\delta_{C(D)}$ and $J_{h^*}$ is equivalent to the existence of $\bar x\in L^1(\reals^d)$, $\bar y\in C(D)$ and $\alpha\in L^1$ such that
\begin{align*}
 h_t(v) &\ge v\cdot \bar x_t-\alpha_t,\\
 h^*_t(x) &\ge \bar y_t\cdot x-\alpha_t.
\end{align*}
Indeed, for $y\in\dom I_h\cap C(D)$ and $\theta\in J_{h^*}$, we may choose $\bar y=y$, $\bar x=\theta/d\mu$, and $\alpha:=\max\{h^*(\bar x),h(\bar v)\}$. Choosing $y=\bar y$ and $\theta$ by $d\theta/d\mu=\bar x$ and $\theta^s=0$, gives the other direction. 
\end{remark}

Just like in Theorem~\ref{thm:sf}, the necessity of inner semicontinuity in Theorem~\ref{thm:main} has not been explicitly stated in the literature before. The domain condition will be analyzed more in detail in the next section.

The following corollary extends \cite[Theorem~5]{roc71} and \cite[Theorem~2]{per14} in the case of lcH $\sigma$-compact $ T$. It builds on Theorem~\ref{thm:main} by giving necessary and sufficient conditions for the equality $I_h+\delta_{C(D)}=I_h$.

\begin{corollary}\label{cor:main2}
Assuming $I_h$ and $J_{h^*}$ are proper, they are conjugates of each other if and only if $\dom h$ is isc and $C(D) =\cl\dom I_h$, and then $\theta\in\partial I_h(y)$ if and only if
\begin{align*}
\begin{split}
d\theta^a/d\mu&\in\partial h(y)\quad\mu\text{-a.e.},\\
d\theta^s/d|\theta^s| &\in\partial^s h(y)\quad|\theta^s|\text{-a.e.}
\end{split}
\end{align*}
\end{corollary}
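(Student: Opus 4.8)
The plan is to derive Corollary~\ref{cor:main2} from Theorem~\ref{thm:main} by observing that the equality $I_h=I_h+\delta_{C(D)}$ holds precisely when $\dom I_h\subseteq C(D)$, and that under this equality the domain condition of the theorem simplifies to $C(D)=\cl\dom I_h$. First I would note that $I_h+\delta_{C(D)}$ and $I_h$ agree as functions on $C$ if and only if $\dom I_h\subseteq C(D)$, since $\delta_{C(D)}$ adds nothing exactly on the set where every $y\in\dom I_h$ already lies in $C(D)$; in that case the two functionals have identical conjugates, identical properness, and identical subdifferentials, so the whole content of Theorem~\ref{thm:main} transfers verbatim.

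Next I would reconcile the two domain conditions. Theorem~\ref{thm:main} requires $C(D)=\cl(\dom I_h\cap C(D))$. If $\dom I_h\subseteq C(D)$, then $\dom I_h\cap C(D)=\dom I_h$, and the condition becomes exactly $C(D)=\cl\dom I_h$, which is the domain hypothesis stated in the corollary. Conversely, I would argue that $C(D)=\cl\dom I_h$ already forces $\dom I_h\subseteq C(D)$: indeed $\dom I_h\subseteq\cl\dom I_h=C(D)$, and membership of $y$ in $C(D)$ means $y_t\in D_t=\cl\dom h_t$ for all $t$, so any $y$ with $I_h(y)<\infty$ (hence $h_t(y_t)<\infty$, i.e.\ $y_t\in\dom h_t\subseteq D_t$, for $\mu$-a.e.\ $t$) lands in $C(D)$ once the equality $C(D)=\cl\dom I_h$ holds. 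Thus the condition $C(D)=\cl\dom I_h$ is equivalent, in the presence of the isc hypothesis, to the pair (domain condition of Theorem~\ref{thm:main}) together with $I_h=I_h+\delta_{C(D)}$.

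With these equivalences in hand, the proof assembles quickly. For sufficiency, suppose $\dom h$ is isc and $C(D)=\cl\dom I_h$. The latter gives $\dom I_h\subseteq C(D)$, hence $I_h=I_h+\delta_{C(D)}$, and it also yields $C(D)=\cl(\dom I_h\cap C(D))$; so Theorem~\ref{thm:main} applies and $(I_h)^*=(I_h+\delta_{C(D)})^*=J_{h^*}$, with the stated subdifferential formula inherited directly since $\partial I_h(y)=\partial(I_h+\delta_{C(D)})(y)$. For necessity, suppose $I_h$ and $J_{h^*}$ are conjugate. Then $I_h=J_{h^*}^*=I_h+\delta_{C(D)}$ by Lemma~\ref{lem:J_h}, which forces $\dom I_h\subseteq C(D)$ and shows $I_h+\delta_{C(D)}$ and $J_{h^*}$ are conjugate; Theorem~\ref{thm:main} then delivers that $\dom h$ is isc and $C(D)=\cl(\dom I_h\cap C(D))=\cl\dom I_h$.

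The step I expect to require the most care is pinning down the direction $C(D)=\cl\dom I_h\Rightarrow\dom I_h\subseteq C(D)$ and, symmetrically, extracting $\dom I_h\subseteq C(D)$ from the mere conjugacy of $I_h$ and $J_{h^*}$. The subtlety is that $y\in\dom I_h$ only guarantees $y_t\in\dom h_t$ for $\mu$-almost every $t$, whereas membership in $C(D)$ is a pointwise (everywhere) requirement; bridging this gap is exactly where Lemma~\ref{lem:J_h} does the work, since its conclusion $J_{h^*}^*=I_h+\delta_{C(D)}$ identifies $I_h$ with $I_h+\delta_{C(D)}$ as functionals and thereby encodes the inclusion at the level of effective domains rather than requiring a pointwise almost-everywhere argument. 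Everything else is bookkeeping that transports the conclusions of Theorem~\ref{thm:main} across the identity $I_h=I_h+\delta_{C(D)}$.
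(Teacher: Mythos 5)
Your proposal is correct and follows essentially the same route as the paper: sufficiency by observing that $C(D)=\cl\dom I_h$ forces $\dom I_h\subseteq C(D)$, hence $I_h=I_h+\delta_{C(D)}$ and $C(D)=\cl(\dom I_h\cap C(D))$, then invoking Theorem~\ref{thm:main}; necessity by using Lemma~\ref{lem:J_h} to identify $I_h$ with $I_h+\delta_{C(D)}$ and transporting the conclusions of Theorem~\ref{thm:main} back. The extra discussion of the $\mu$-a.e.\ versus everywhere distinction is harmless but not needed, since $\dom I_h\subseteq\cl\dom I_h=C(D)$ already settles the inclusion.
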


\begin{proof}
Since $C(D) = \cl\dom I_h$ implies that $I_h=I_h+\delta_{C(D)}$ and that $C(D) = \cl(\dom I_h\cap C(D))$, we have sufficiency by Theorem~\ref{thm:main}. Assuming that $I_h$ and $J_{h^*}$ are conjugates of each other, Lemma~\ref{lem:J_h} implies that $I_h+\delta_{C(D)}=I_h$, so $\dom I_h\cap C(D)=\dom I_h$, and the sufficiency follows from Theorem~\ref{thm:main}.
\end{proof}

The related main theorems by Rockafellar and Perkki\"o in \cite{roc71} and \cite{per14} give sufficient conditions for Corollary~\ref{cor:main2}. Rockafellar's assumption involved the notion of "full lower semicontinuity" of set-valued mappings whereas Perkki\"o formulated the assumption in terms of "outer $\mu$-regularity". We return to these concepts in the next section.

\section{The domain conditions in the main results}\label{sec:suf2}

In this section we will analyze more in detail the conditions 
\begin{align*}
C(D)&=\cl(\dom I_h\cap C(D)),\\
C(D)&=\cl\dom I_h 
\end{align*}
appearing in Theorem~\ref{thm:main} and Corollary~\ref{cor:main2}. The former is characterized in Theorem~\ref{thm:ic} below whereas for the latter we give sufficient conditions in terms of the domain mapping $D$.

The condition in the next result, which extends the  condition in \cite[Theorem~5]{roc71}, can be found in \cite[Proposition~6]{bv88} for metrizable $\sigma$-lcH $T$. Here we generalize to $\sigma$-lcH $T$ and give the converse statement for inner semicontinuous domains.

\begin{theorem}\label{thm:ic}
Let $I_h+\delta_{C(D)}$ and $J_{h^*}$ be proper. If, for every $t\in T$ and every $v\in\rinterior D_t$, there is $O\in\H_t$ and $y\in C(D)$ such that $y_t=v$ and $\int_O h(y)d\mu<\infty$, then 
\[
C(D)=\cl(\dom I_h\cap C(D)).
\]
The converse holds if $D$ is isc.
\end{theorem}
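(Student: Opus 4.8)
The plan is to prove the two inclusions separately. Since $C(D)$ is closed (it is cut out by the closed conditions $y_t\in D_t$), we always have $\cl(\dom I_h\cap C(D))\subseteq C(D)$, so only the reverse inclusion uses the hypothesis.

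\emph{Forward direction.} Fix $y\in C(D)$ and $\delta>0$; I will build $z\in\dom I_h\cap C(D)$ with $\|z-y\|\le 2\delta$. Properness furnishes (as in the Remark following Theorem~\ref{thm:main}) a point $\bar y\in\dom I_h\cap C(D)$ together with $\bar x,\alpha\in L^1$ such that $h_t(w)\ge w\cdot\bar x_t-\alpha_t$; since every element of $C$ is bounded, this makes the negative part of $h(\cdot)$ integrable along any selection, so a selection $z\in C(D)$ lies in $\dom I_h$ as soon as $\int h(z)^+\,d\mu<\infty$. Because $y$ and $\bar y$ vanish at infinity, choose a compact $K$ with $|y_s|,|\bar y_s|<\delta$ off $K$. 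For each $t\in K$ pick $v^t\in\rinterior D_t$ with $|v^t-y_t|<\delta/2$ (possible since $\rinterior D_t$ is dense in $D_t$), and apply the hypothesis to obtain $O_t\in\H_t$ and $w^t\in C(D)$ with $w^t_t=v^t$ and $\int_{O_t}h(w^t)\,d\mu<\infty$; shrinking $O_t$ and making it precompact via \cite[Theorem XI.6.2]{dug66}, I may assume $|w^t_s-y_s|<\delta$ for all $s\in O_t$.

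The key point is that only finitely many local pieces are needed, which sidesteps any convergence issue for an infinite sum of integrals. Extract a finite subcover $O_{t_1},\dots,O_{t_m}$ of $K$, adjoin $V:=T\setminus K$, and take a continuous partition of unity $\beta_0,\beta_1,\dots,\beta_m$ subordinate to $\{V,O_{t_1},\dots,O_{t_m}\}$ by \cite[Theorem 36.1]{mun00}. Set $z:=\beta_0\bar y+\sum_{j=1}^m\beta_j w^{t_j}$. Then $z\in C(D)$, since at each $s$ it is a convex combination of points of the convex set $D_s$, and $z=\bar y$ outside the precompact set $\bigcup_j O_{t_j}$, so $z$ vanishes at infinity. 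The norm estimate is termwise: where $\beta_j(s)>0$ with $j\ge 1$ one has $s\in O_{t_j}$ and $|w^{t_j}_s-y_s|<\delta$, while where $\beta_0(s)>0$ one has $s\notin K$ and $|\bar y_s-y_s|<2\delta$, giving $\|z-y\|\le 2\delta$. Finally, convexity of each $h_s$ yields $h(z)^+\le \beta_0 h(\bar y)^+ +\sum_{j}\beta_j h(w^{t_j})^+$, whose integral is at most $\int h(\bar y)^+\,d\mu+\sum_{j=1}^m\int_{O_{t_j}}h(w^{t_j})^+\,d\mu<\infty$; hence $z\in\dom I_h$. As $\delta$ was arbitrary, $y\in\cl(\dom I_h\cap C(D))$.

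\emph{Converse direction (assuming $D$ isc).} Fix $t_0$ and $v\in\rinterior D_{t_0}$; I must produce $y\in C(D)$ with $y_{t_0}=v$ and finite integral on some $O\in\H_{t_0}$, and in fact $O=T$ will work. The obstacle here is hitting the value $v$ \emph{exactly} while retaining integrability, and the device is to average several integrable selections whose values at $t_0$ surround $v$. Choose finitely many points $w^0,\dots,w^k\in\rinterior D_{t_0}$ whose convex hull is a neighborhood of $v$ relative to $\aff D_{t_0}$. For each $j$, the modified mapping $D^j$ with $D^j_{t_0}=\{w^j\}$ and $D^j_s=D_s$ for $s\ne t_0$ is again isc: its preimage of any open set differs from that of $D$ only at the point $t_0$, which is closed because $T$ is $T_1$, so the preimage stays open. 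Michael's theorem \cite[Theorem 3.1''\!']{mic56} then gives a continuous selection of $D^j$, which I blend with a fixed $\bar y\in C(D)$ (existing by properness) through a compactly supported cutoff equal to $1$ at $t_0$ to obtain $\tilde y^j\in C(D)$ with $\tilde y^j_{t_0}=w^j$ that vanishes at infinity.

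Now invoke the hypothesis $C(D)=\cl(\dom I_h\cap C(D))$ to approximate each $\tilde y^j$ by some $\hat y^j\in\dom I_h\cap C(D)$, and set $v^j:=\hat y^j_{t_0}$; taking the approximations close enough to $w^j$ keeps $v$ inside $\co\{v^0,\dots,v^k\}$, so $v=\sum_j\lambda_j v^j$ with $\lambda_j\ge 0$ and $\sum_j\lambda_j=1$. The selection $y:=\sum_j\lambda_j\hat y^j$ then lies in $C(D)$, satisfies $y_{t_0}=v$, and by convexity of the integrands $I_h(y)\le\sum_j\lambda_j I_h(\hat y^j)<\infty$, so $\int_T h(y)\,d\mu<\infty$, as required. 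I expect the genuine difficulty to lie in the two value-matching constructions, namely the pinned isc mapping together with the vanishing-at-infinity blend in the converse, and the simultaneous control of closeness, membership in $C(D)$, and integrability in the forward gluing, rather than in any single estimate.
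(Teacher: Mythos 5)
Your proof is correct and follows essentially the same route as the paper's: the forward direction via a finite subcover of a compact set outside which $y$ is small, a subordinate partition of unity, and convexity of $h$ and of $D$; the converse via Michael's selection theorem applied to the mapping pinned at $t_0$, density of $\dom I_h\cap C(D)$, and a convex combination of approximants whose values at $t_0$ surround $v$. You supply a few details the paper leaves implicit (the integrable minorant from properness to handle the negative part of $h$, and the vanishing-at-infinity blend of the Michael selection), but the argument is the same.
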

\begin{proof}
Let $y^0\in\dom I_h\cap C(D)$, $\epsilon>0$ and $y\in C(D)$. There is a compact $K\subset T$ such that $\|y^0\|,\|y\|\le \epsilon/2$ outside $K$. By compactness and the assumptions, there exist $t_i\in T$, $i=1,\dots,n$ for which there are $y^i\in C(D)$  and $O_{t_i}\in\H_{t_i}$ such that $|y^i-y|\le \epsilon$ on $O_{t_i}$, $\int_{O_{t_i}} h(y^i)d\mu<\infty$ and $K\subset \bigcup_{i=1}^n O_{t_{i}}$. Let $\alpha^0,\alpha^i$, $i=1,\dots,n$ be continuous partition of unity subordinate to $\{K^C,O_{t_{1}},\dots O_{t_n}\}$. By convexity,  $\sum_{i=0}^n \alpha^i y^{i}\in(\dom I_h\cap C(D))$, and, by construction, $\|\sum_{i=1} \alpha^i y^{t_i}-y\|\le \epsilon$. Since $\epsilon$ was arbitrary, we see that $y\in\cl(\dom I_h\cap C(D))$.

Assume now that $D$ is isc and that $C(D)=\cl(\dom I_h\cap C(D))$. Let $t\in T$ and $v\in\rinterior D_t$. There exists $v^i\in\rinterior D_t$, $i=0,\dots, d$, such that $v\in\co \{v^i\mid i=0,\dots d\}$. Let $\epsilon>0$ be small enough so that $v\in\co \{ \bar v^i\mid i=0,\dots d\}$ whenever, for all $i$, $\bar v^i\in \uball_{v^i,\epsilon}$. By Michael selection theorem \cite[Theorem 3.1''\!']{mic56}, there is, for every $i$, a continuous selection of $D$ taking a value $v^i$ at $t$, so, there exists $y^i\in\dom I_h\cap C(D)$ such that $y^{i}_t\in\uball_{v^i,\epsilon}$. Let $\alpha^i\in(0,1)$ be convex weights such that $\sum_{i=0}^d \alpha^i y^i_t=v$. By convexity, $\sum_{i=0}^d \alpha^i y^i \in C(D)\cap\dom I_h$ which proves the claim.
\end{proof}

We now turn to the condition $C(D)=\cl\dom I_h$ in Corollary~\ref{cor:main2}. We say that a function $y: T\to\reals^d$ is a {\em $\mu$-selection} of $D$ if $y_t\in D_t$ outside a $\mu$-null set. Clearly, every $y\in\dom I_h$ is a continuous $\mu$-selection of $D$. Thus, if every continuous $\mu$-selection of $D$ is a selection of $D$, we have $\dom I_h\subseteq C(D)$ and thus, $I_h+\delta_{C(D)}=I_h$. If, in adddition, $C(D)=\cl(\dom I_h\cap C(D))$, we thus get
\[
C(D)=\cl\dom I_h.
\]

Following \cite{per14}, we say that a set-valued mapping $S$ is {\em outer $\mu$-regular} if
\[
\mli S_t\subseteq \cl S_t\quad\forall t\in T,
\]
where
\[
\mli S_t :=\{x\in\reals^d\mid \exists O\in \H_t:\ \mu(S^{-1}(A)\cap O)=\mu(O)\ \forall A\in\H^o_x \}.
\]
By \cite[Theorem~1]{per14}, every continuous $\mu$-selection of an outer $\mu$-regular mapping is a selection of its image closure. The necessity of outer $\mu$-regularity is given in \cite[Theorem~2]{per14} for solid convex-valued mappings when the underlying topological space is Lindel\"of perfectly normal $T_1$. Here we give sufficient conditions for continuous $\mu$-selections to be selections in terms of familiar continuity notions of set-valued mappings.

Given a topology $\bar\tau$ on $ T$, we denote the $\bar\tau$ neighborhood-system of $t$ by $\bar\H_t$. The mapping $S$ is {\em $\bar \tau$-outer semicontinuous (osc) at $t$} if $\bar \tau\text{-}\limsup S_t\subseteq S_t$, where
\[
\bar \tau\text{-}\limsup S_t:=\{x\in\reals^d \mid S^{-1}(A)\in \bar\H_t^\#\ \forall A\in\H_x^o\},
\]
and $\bar\H_t^\#:=\{B\subset  T\mid B\cap O\ne\emptyset\ \forall O\in \bar\H_t\}$. We denote $\bar\tau\supseteq\tau$ if $\bar\tau$ is finer than $\tau$, and we say that $\bar\tau$ {\em supports $\mu$ locally at $t$} if there is $O\in\bar {\mathcal H}_t$ such that every $O$-relatively $\bar\tau$-open set has a subset with positive measure.

The following definition of $\mu$-fullness is inspired by the notion of full lower semicontinuity introduced in \cite{roc71} for solid convex-valued mappings. Example~\ref{ex:full} below shows that such mappings are $\mu$-full.
\begin{definition}\label{def:full}
A set-valued mapping $S$ is {\em $\bar\tau$-full locally at $t$} if there is $O\in\bar{\mathcal H}_t$ such that
\[
(\bar\tau\text{-}\liminf(\bar\tau\text{-}\limsup S))_{t'}\subseteq\cl S_{t'}\quad\forall\ t'\in O.
\]
The mapping $S$ is {\em $\mu$-full} if, for every $t$, $S$ is $\bar\tau$-full locally at $t$ with respect to some $\bar\tau\supseteq \tau$ that supports $\mu$ locally at $t$.
\end{definition}

Recall that $S$ is {\em $\bar\tau$-continuous at $t$} if it is both isc and osc at $t$, and that $S$ is $\bar\tau$-continuous locally at $t$ it is continuous at every point on some $\bar\tau$-neighborhood of $t$. Example~\ref{ex:rd} below shows that, in the next definition, it is important to allow the topology to depend on $t$. 

\begin{definition}\label{def:3}
The mapping $S$ is $\mu$-continuous if, for every $t$, its image closure is continuous locally at $t$ with respect to some $\bar\tau\supseteq\tau$ that supports $\mu$ locally at $t$.
\end{definition}

\begin{theorem}\label{thm:or}
Continuous $\mu$-selections of $S$ are selections of the image closure of $S$ under either of the following conditions,
\begin{enumerate}
\item $S$ is $\mu$-continuous,
\item $S$ is $\mu$-full.
\end{enumerate}
\end{theorem}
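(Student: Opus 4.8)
The plan is to deduce the statement from \cite[Theorem~1]{per14}: since that result asserts that continuous $\mu$-selections of an outer $\mu$-regular mapping are selections of its image closure, it suffices to prove that each of the two conditions implies that $S$ is outer $\mu$-regular, i.e. $\mli S_t\subseteq\cl S_t$ for every $t$. I would fix $t$ together with the topology $\bar\tau\supseteq\tau$ supplied by the relevant hypothesis, noting that $\bar\tau$ supports $\mu$ locally at $t$; let $O_s\in\bar\H_t$ be a corresponding $\bar\tau$-open support neighborhood.

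The technical core is the claim that, for any $\bar\tau\supseteq\tau$ supporting $\mu$ locally at $t$,
\[
\mli S_t\subseteq\bigl(\bar\tau\text{-}\liminf(\bar\tau\text{-}\limsup S)\bigr)_t .
\]
To prove it, take $x\in\mli S_t$ and a $\tau$-neighborhood $O$ of $t$ with $\mu(O\setminus S^{-1}(A))=0$ for every $A\in\H^o_x$. Writing $\hat S:=\bar\tau\text{-}\limsup S$, I would first show that $x\in\hat S_{t'}$ for every $t'\in O':=\inte_\tau O\cap O_s$. Indeed, for such $t'$, any $A\in\H^o_x$ and any $\bar\tau$-open $W\in\bar\H_{t'}$, the set $W\cap\inte_\tau O\cap O_s$ is $\bar\tau$-open, nonempty and contained in $O_s$, hence carries positive measure by the support property; since it sits inside $O$, where $S^{-1}(A)$ has full measure, it must meet $S^{-1}(A)$, giving $S^{-1}(A)\cap W\neq\emptyset$. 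As $W$ was arbitrary this means $S^{-1}(A)\in\bar\H^\#_{t'}$ for every $A\in\H^o_x$, i.e. $x\in\hat S_{t'}$. Consequently $O'\subseteq\hat S^{-1}(A)$ for every $A\in\H^o_x$, so $\hat S^{-1}(A)\in\bar\H_t$, which is exactly $x\in\bar\tau\text{-}\liminf\hat S_t$, proving the claim. Granting this, condition~2 is immediate: $\mu$-fullness provides $O\in\bar\H_t$ with $(\bar\tau\text{-}\liminf\hat S)_{t'}\subseteq\cl S_{t'}$ on $O$, and evaluating at $t'=t$ yields $\mli S_t\subseteq\cl S_t$.

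For condition~1 I would show that $\mu$-continuity implies $\mu$-fullness and then quote the previous paragraph. The key observation is that the outer limit only sees the image closure: for open $A$ one has $S^{-1}(A)=(\cl S)^{-1}(A)$, whence $\bar\tau\text{-}\limsup S=\bar\tau\text{-}\limsup\cl S$. On a $\bar\tau$-neighborhood of $t$ where $\cl S$ is outer semicontinuous this gives $\bar\tau\text{-}\limsup S\subseteq\cl S$ pointwise, and applying the monotone operator $\bar\tau\text{-}\liminf$, together with the general inclusion $\bar\tau\text{-}\liminf\subseteq\bar\tau\text{-}\limsup$ and outer semicontinuity once more, yields $(\bar\tau\text{-}\liminf(\bar\tau\text{-}\limsup S))_{t'}\subseteq\cl S_{t'}$ for all $t'$ in a smaller $\bar\tau$-neighborhood of $t$; this is precisely $\mu$-fullness at $t$ (so that inner semicontinuity of $\cl S$ is in fact not needed here).

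The main obstacle is the technical core claim, where the measure-theoretic, $\tau$-based notion $\mli$ must be matched with the purely topological, $\bar\tau$-based double limit $\bar\tau\text{-}\liminf\,\bar\tau\text{-}\limsup$. The delicate point is bridging the two topologies: one has full-measure information on a $\tau$-neighborhood but must test membership against arbitrary $\bar\tau$-neighborhoods of nearby points, and the local support hypothesis is exactly what guarantees that every such finer neighborhood still carries positive measure and therefore cannot avoid the full-measure set $S^{-1}(A)$. Some care is also needed to pass to $\bar\tau$-interiors so that the monotonicity of $\bar\tau\text{-}\liminf$ and the local validity of the pointwise inclusions are legitimate.
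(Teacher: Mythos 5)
Your proposal is correct, but it reaches the conclusion by a genuinely different route than the paper. The paper proves the statement directly: it takes the given continuous $\mu$-selection $y$, uses the support property of $\bar\tau$ to show $y_{t'}\in\bar\tau\text{-}\limsup S_{t'}$ for all $t'$ in the support neighborhood (every $\bar\tau$-neighborhood of $t'$ carries positive measure, hence meets the full-measure set where $y$ selects from $S$), and then uses continuity of $y$ once more to conclude $y_t\in\bar\tau\text{-}\liminf(\bar\tau\text{-}\limsup S)_t\subseteq\cl S_t$. You instead prove the intermediate implication that $\mu$-fullness forces outer $\mu$-regularity, via the inclusion $\mli S_t\subseteq(\bar\tau\text{-}\liminf(\bar\tau\text{-}\limsup S))_t$ for any $\bar\tau\supseteq\tau$ supporting $\mu$ locally at $t$, and then invoke \cite[Theorem~1]{per14}. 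The measure-theoretic engine is the same in both arguments (positive-measure subsets of $\bar\tau$-open sets cannot avoid a set of full measure in $O$), but you track an arbitrary point $x\in\mli S_t$ rather than the values of a specific selection. Your route buys two things the paper's does not state: the explicit bridge ``$\mu$-full $\Rightarrow$ outer $\mu$-regular,'' which situates the new notions of Section~4 relative to the one from \cite{per14}, and the observation that for part~1 only outer semicontinuity of the image closure is needed (the paper's reduction of part~1 to part~2 invokes full $\bar\tau$-continuity). The paper's route, in exchange, is self-contained and does not lean on \cite[Theorem~1]{per14}. One small point of care in your core claim: the definition of $\mli$ is stated as $\mu(S^{-1}(A)\cap O)=\mu(O)$, which coincides with your reading $\mu(O\setminus S^{-1}(A))=0$ only when $\mu(O)<\infty$; you should either adopt the full-measure reading explicitly or shrink $O$ to a precompact neighborhood at the outset, which is harmless under that reading. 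With that understood, every step of your argument checks out.
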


\begin{proof}
1 follows from 2, since when $S$ is $\bar\tau$-continuous locally at $t$, then $\bar\tau\text{-}\limsup S_t=\cl S_t$, $\bar\tau\text{-}\liminf S_t= \cl S_t$, and $\bar\tau\text{-}\liminf(\bar\tau\text{-}\limsup S)_t=\cl S_t$.

To prove 2, let $y$ be a $\tau$-continuous $\mu$-selection of $S$. Fix $t$, and $\bar\tau$ and $O\in\bar{\mathcal H}_t$ in Definition~\ref{def:full}. We may assume that every $O$-relative $\bar\tau$-open set has a set with positive measure, Then every $\bar\tau$-neighborhood of $t'\in O$ contains a subset with positive measure and, thus, a point $t{''}$ with $y_{t'}\in S_{t{''}}$, and hence $y_{t'}\in\bar\tau\text{-}\limsup S_{t'}$. By continuity again, $y_t\in\bar\tau\text{-}\liminf (\bar\tau\text{-}\limsup S)_t$, so $y_t\in \cl S_t$, which proves 2.
\end{proof}

\begin{example}\label{ex:full}
A fully lower semicontinuous mappings $S$ is $\tau$-full. Indeed, \cite[Lemma~2]{roc71} characterizes full lower semicontinuity by the property that 
\[
\interior G=\interior\cl G
\]
for $G=\{(t,x)\mid x\in\interior S\}$. It suffices to show that for a set-valued mapping $\Gamma$, $\interior \gph{\Gamma} \subseteq \gph{((\liminf \Gamma)^o)}$, where $(\liminf \Gamma)^o_t:=\interior\liminf \Gamma_t$. 

Assume that $(t,x)\in\interior\gph{\Gamma}$.  Then there exists $A\in\H_x^o$ and $O\in\H_t$ such that $O\times A\in\gph{\Gamma}$. Thus for every $x'\in A$ and $t'\in O$, $\Gamma^{-1}(O')\in\H_{x'}$ for every $O'\in\H_{t'}$, which means that $x'\in\liminf \Gamma_{t'}$, and hence $(x,t)\in \gph{((\liminf \Gamma)^o)}$. 
\end{example}

\begin{example}\label{ex:rd}
Let $ T=\reals$ be endowed with the Euclidean topology $\tau$. Assume that $S:\reals\tos \reals^d$ has {\em no removable discontinuities} in the sense that, for every $y\in\reals^d$ and $t$, the distance function $t\mapsto d(y,S_t)$ is either left- or right-continuous locally at $t$. Then $S$ is $\mu$-continuous. Indeed, by \cite[Proposition 5.11]{rw98}, the left-continuity  of $d(y,S)$ is equivalent to $S$ being left-continuous with respect to the topology generated by $\{(s,t]\mid s<t\}$, and the right-continuity is handled symmetrically. 
\end{example}

\section{Appendix}

The following lemma gives a general condition for lower semicontinuity of a convex function on the dual of a Frechet space $V$.
\begin{lemma}\label{lem:mon}
A convex function $g$ on the dual of $V$ is lsc if and only if $g+\sigma_{O}$ is lsc for every element $O$ of a local basis of the origin.
\end{lemma}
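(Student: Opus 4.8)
The statement concerns lower semicontinuity with respect to the weak$^*$ topology $\sigma(V^*,V)$, which is the notion relevant to the biconjugate theorem invoked in the proof of Theorem~\ref{thm:main}. I would prove the two implications separately, and I may assume $g$ is proper convex (the cases $g\equiv+\infty$ or $g\equiv-\infty$ being trivial). The forward implication is routine: for a balanced convex $O$ the support function $\sigma_O(\theta)=\sup_{y\in O}\langle y,\theta\rangle$ is a supremum of weak$^*$-continuous linear functionals, hence weak$^*$-lsc, and it is nonnegative; so if $g$ is lsc, then $g+\sigma_O$ is a sum of two weak$^*$-lsc functions that is nowhere of the indeterminate form $\infty-\infty$, and is therefore lsc.

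For the converse, the plan is to show that every sublevel set $\lev_{\le\alpha}g=\{\theta\mid g(\theta)\le\alpha\}$ is weak$^*$-closed, which for the convex function $g$ is equivalent to weak$^*$-lower semicontinuity. Since $V$ is a Frechet (hence metrizable locally convex) space, I would invoke the Krein--\v{S}mulian/Banach--Dieudonn\'e theorem: a convex subset of $V^*$ is $\sigma(V^*,V)$-closed if and only if its intersection with the polar $U^\circ=\{\theta\mid\langle y,\theta\rangle\le1\ \forall y\in U\}$ of every neighborhood $U$ of the origin is $\sigma(V^*,V)$-closed. Each such polar is weak$^*$-compact by the Alaoglu--Bourbaki theorem, and these polars form a fundamental system of equicontinuous sets. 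Thus it suffices to prove that $\lev_{\le\alpha}g\cap U^\circ$ is weak$^*$-closed for every $\alpha$ and every neighborhood $U$.

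The crux of the argument is a scaling trick that removes the slack coming from the extra term $\sigma_O$. Fix $\alpha$ and $U$, and let $(\theta_i)$ be a weak$^*$-convergent net in $\lev_{\le\alpha}g\cap U^\circ$ with limit $\theta$; since $U^\circ$ is weak$^*$-compact, $\theta\in U^\circ$. For any $\eps>0$ choose a local-basis element $O\subseteq\eps U$. Then for $y\in O$ and $\theta'\in U^\circ$ one has $\langle y,\theta'\rangle\le\eps\,\sigma_U(\theta')\le\eps$, so $\sigma_O\le\eps$ on $U^\circ$. Consequently $(g+\sigma_O)(\theta_i)\le\alpha+\eps$, and the assumed weak$^*$-lsc of $g+\sigma_O$ yields $g(\theta)\le(g+\sigma_O)(\theta)\le\liminf_i(g+\sigma_O)(\theta_i)\le\alpha+\eps$, using $\sigma_O\ge0$. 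Letting $\eps\downarrow0$ gives $g(\theta)\le\alpha$, so $\lev_{\le\alpha}g\cap U^\circ$ is closed, and the reduction above finishes the proof.

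The main obstacle is the correct use of the Krein--\v{S}mulian/Banach--Dieudonn\'e reduction in the Frechet (rather than Banach) setting, together with the observation that $\sigma_O$ can be made uniformly arbitrarily small on a \emph{fixed} polar $U^\circ$ by shrinking $O$; this uniform smallness is exactly what converts lower semicontinuity of the perturbed functions $g+\sigma_O$ into closedness of the sublevel sets of $g$ on equicontinuous sets. In the Banach case of interest, $V=C$, the local basis is $\{\eps\mathbb B\}$, $\sigma_{\eps\mathbb B}=\eps\|\cdot\|$, and the statement specializes to the familiar criterion that a convex function on the dual is weak$^*$-lsc if and only if its sum with every multiple of the dual norm is.
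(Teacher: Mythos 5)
Your proof is correct and follows essentially the same route as the paper: both reduce via the Krein--\v{S}mulian theorem to equicontinuous/weak$^*$-compact sets and then exploit that $\sigma_O$ can be made uniformly smaller than any $\eps>0$ on such a set by shrinking $O$, so that lower semicontinuity of $g+\sigma_O$ transfers to $g$ up to an arbitrarily small error. The only (immaterial) difference is that you phrase the reduction in terms of closedness of $\lev_{\le\alpha}g\cap U^\circ$ for polars of neighborhoods $U$, whereas the paper argues by contradiction with a net on an arbitrary weak$^*$-compact set $K$ and picks $O\subseteq\frac{\eps}{2}K^\circ$ (implicitly using that $K$ is equicontinuous in the Fr\'echet setting, a point your formulation sidesteps).
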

\begin{proof}
By Krein-Smulian theorem \cite[Theorem 22.6]{kn76}, it suffices to show that $g$ is lsc relative to every weak$^*$-compact set $K$. Assume to the contrary that there is a net $(x^\nu)$ in $K$ converging to $x$, $\alpha\in\reals$, and $\epsilon>0$ such that $g(x^\nu)\le\alpha$ but $g(x)\ge\alpha+\epsilon$. Let $O\subseteq \frac{\epsilon}{2} K^\circ$ be an element of the local basis, where $K^\circ:=\{v\mid \sup_{x\in K}|\langle v,x\rangle|\le 1\}$ is the polar of $K$. Then $\sigma_{O}\le\sigma_{\frac{\epsilon}{2} K ^\circ} \le \frac{\epsilon}{2}$ on $K$, so
\begin{align*}
 \alpha+\epsilon &\le g(x) \le  (g+\sigma_O)(x)\le \liminf (g+\sigma_{O})(x^\nu) < \alpha+\epsilon
\end{align*}
which is a contradiction.
\end{proof}

\bibliographystyle{alpha}
\bibliography{sp}
\end{document}